\newtheorem{theorem}{Theorem}
\newtheorem{lemma}{Lemma}
\newtheorem{corollary}{Corollary}
\newtheorem{remark}{Remark}
\numberwithin{equation}{section}
\newcommand{\card}{\operatorname{card}}
\newcommand{\supp}{\operatorname{supp}}
\newcommand{\G}{\mathcal{G}}
\newcommand{\N}{\mathcal{N}}
\newcommand{\Cor}{\mathcal{C}}
\newcommand{\cR}{\mathcal{R}}
\newcommand{\cS}{\mathcal{S}}
\newcommand{\CIh}{C_{I_h}}
\newcommand{\CIH}{C_{I_H}}
\newcommand{\Col}{C_{\mathrm{ol}}}
\newcommand{\Colm}{C_{\mathrm{ol},m}}
\newcommand{\nei}{\mathsf{N}}
\begin{document}
% header fuer unsere persoenliche versionsuebersicht
\pagestyle{fancy}
\fancyhead{}
\setlength{\headheight}{14pt}
\renewcommand{\headrulewidth}{0pt}
\fancyhead[c]{\small \it Multiscale Petrov-Galerkin FEM}
%\fancyhead[R]{\it\tiny\today}

\title{Stable Multiscale Petrov-Galerkin Finite Element Method
       for High Frequency Acoustic Scattering}

\author{%
       D.~Gallistl\thanks{Institut f\"ur Numerische Simulation,
         Universit\"at Bonn,        
         Wegelerstra{\ss}e 6, D-53115 Bonn, Germany,
         \texttt{\{gallistl,peterseim\}@ins.uni-bonn.de}}
        \and 
       D.~Peterseim\footnotemark[1]
       }
\date{}

\maketitle

\begin{abstract}
We present and analyze a pollution-free Petrov-Galerkin multiscale finite
element method 
for the Helmholtz problem with large wave number $\kappa$
as a variant of [Peterseim, ArXiv:1411.1944, 2014].
We use standard continuous $Q_1$ finite elements at a coarse
discretization
scale $H$ as trial functions, whereas the test functions are computed as 
the solutions of local problems at a finer scale $h$.
The diameter of the support of the test functions behaves like $mH$ for 
some oversampling parameter $m$.
Provided 
$m$ is of the order of $\log(\kappa)$
and $h$ is sufficiently small,
the resulting method is stable and quasi-optimal in the regime where
$H$ is proportional to $\kappa^{-1}$.
In homogeneous (or more general periodic) media, the fine scale test 
functions depend only on local mesh-configurations.
Therefore, the seemingly high cost for the computation of the
test functions can be drastically reduced on structured meshes.
We present numerical experiments in two and three space dimensions.
\end{abstract}

{\small
\noindent
\textbf{Keywords} multiscale method, pollution effect, 
 wave propagation, Helmholtz problem, finite element method

\noindent
\textbf{AMS subject classification}
35J05,  %Laplacian operator, reduced wave equation (Helmholtz equation), Poisson equation
65N12,  %Stability and convergence of numerical methods
65N15,  %Error bounds
65N30   %Finite elements, Rayleigh-Ritz and Galerkin methods, finite methods
}

\section{Introduction}

Standard finite element methods (FEMs) for acoustic wave propagation
are well known to exhibit the so-called \emph{pollution effect}
\cite{Babuska:2000:PEF:354138.354150},
which means that the stability and convergence of the scheme
require a much smaller mesh-size than needed for a meaningful 
approximation of the wave by finite element functions.
For an highly oscillatory wave at wave number $\kappa$, the typical
requirement for a reasonable representation reads 
$\kappa H\lesssim 1$ for the mesh-size $H$,
that is some fixed number of elements per wave-length. 
The standard Galerkin FEM typically requires at least
$\kappa^\alpha H\lesssim 1$ where $\alpha>1$ depends on
the method and the stability and regularity properties of the continuous problem.
There have been various attempts to reduce or avoid the pollution 
effect, e.g.,
discontinuous Galerkin methods
\cite{MR2219901,MR2551150,MR2813347,perugia},
high-order finite elements \cite{MS10,MS11},
discontinuous Petrov-Galerkin methods
\cite{Zitelli20112406,dpg},
or the continuous interior penalty method
\cite{Wu2014CIPFEM}
among many others. A good historical overview is provided in
\cite{Zitelli20112406}.

\begin{figure}
\begin{center}
\includegraphics[width=.6\textwidth]{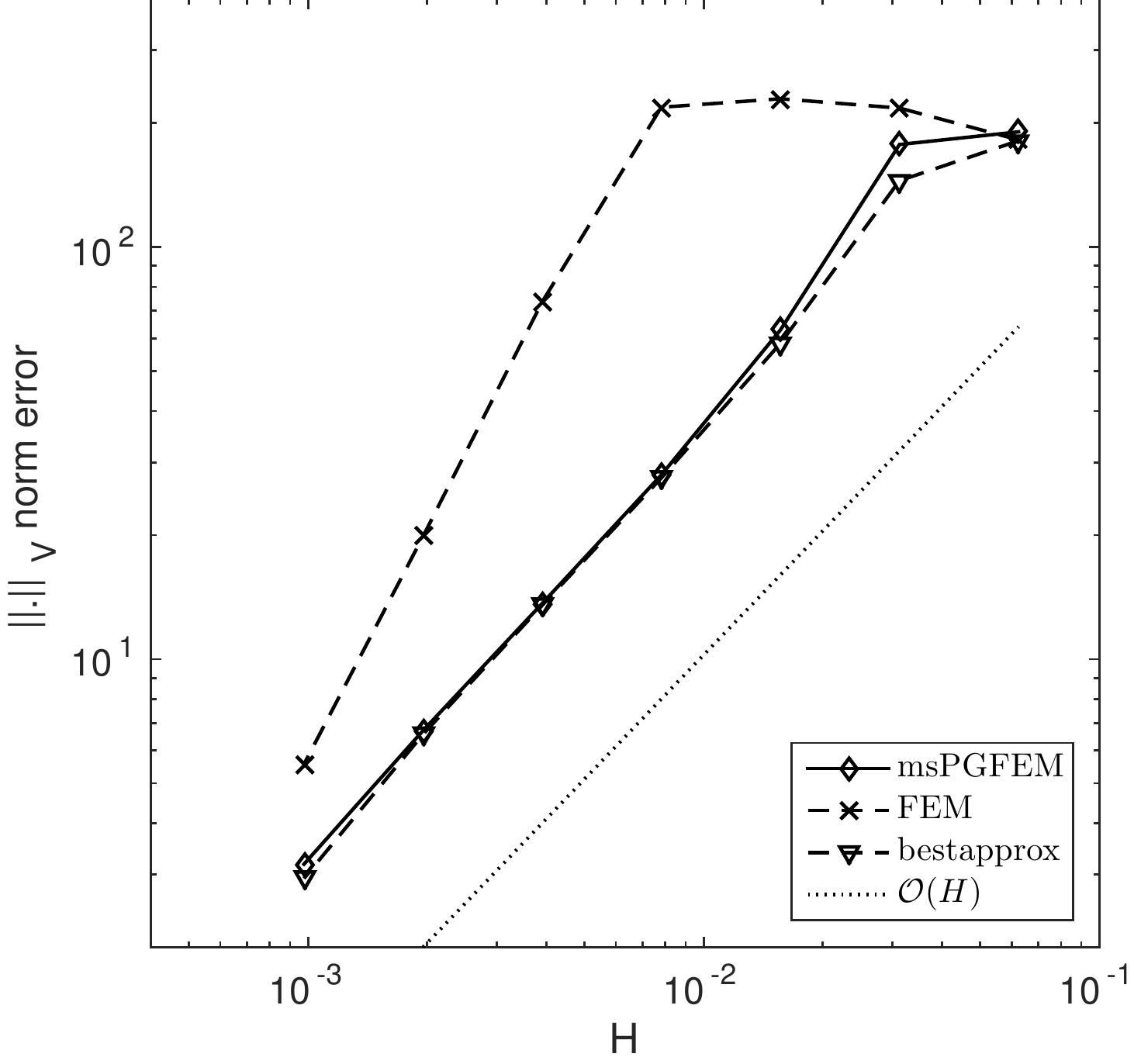}
\caption{ Convergence history of the multiscale FEM (msPGFEM), the standard
          $Q_1$ FEM (FEM) and the best-approximation (bestapprox) in the
          finite element space for a two-dimensional plane wave
          with wave number $\kappa=2^7$ (see also Section~\ref{s:num}).
         \label{f:convhistintro}}
\end{center}
\end{figure}

The work \cite{Peterseim2014} suggested a multiscale 
Petrov-Galerkin method for the Helmholtz equation
where standard finite element 
trial and test functions are modified by a local subscale correction
in the spirit of numerical homogenization \cite{MP14}.
In the numerical experiments of \cite{Peterseim2014}, a variant of that
method appeared attractive where only the test functions are 
modified while standard finite element functions are used as 
trial functions.
In this paper, we analyze that method and reformulate it
as a stabilized $Q_1$ method in the spirit of the variational multiscale
method \cite{Hughes:1995,MR1660141,MR2300286,Malqvist:2011,Peterseim2015}.
The method employs standard $Q_1$ finite element trial functions
on a grid $\G_H$ with mesh-size $H$. The test functions are the 
solutions of local problems with respect to a grid $\G_h$
at a finer scale $h$ which is chosen fine enough to allow for 
stability of the standard Galerkin FEM over $\G_h$.
The diameter of the support of the test functions is proportional to
$mH$ for the oversampling parameter $m$.
Under the condition that $m$ is
logarithmically coupled with the wave number $\kappa$
through  $m\approx \log(\kappa)$,
we prove that the method is pollution-free, i.e., the resolution
condition
$\kappa H\lesssim 1$ is sufficient for stability and quasi-optimality
under fairly general assumptions on the stability of the continuous
problem.
The performance of the method is illustrated in the convergence
history of Figure~\ref{f:convhistintro}. More detailed descriptions
on the numerical experiments will be given in Section~\ref{s:num}.
As the test functions only depend on local mesh-configurations,
on structured meshes the number of test functions to be actually computed
is much smaller then the overall number of trial and test functions 
on the coarse scale.
In many cases,
the computational cost is then dominated by the coarse solve
and the overhead compared with a standard FEM on the same coarse
mesh remains proportional to $m^d\approx\log(\kappa)^d$.
Even if no structure of the mesh can be exploited to reduce the
number of patch problems, the method may still be attractive if
the problem has to be solved many times with different data
(same $\kappa$ though) in the context of inverse problems or
parameter identification problems.

\medskip
The paper is structured as follows. Section~\ref{s:Helmholtz}
states the Helmholtz
problem and recalls some important results.
The definition of the new Petrov-Galerkin method follows in 
Section~\ref{s:method}.
Stability and error analysis are carried out in Section~\ref{s:erroranalysis}.
Section~\ref{s:num} is devoted to numerical experiments.

\bigskip
Standard notation on complex-valued Lebesgue and Sobolev spaces
applies throughout this paper. 
The bar indicates
complex conjugation and $i$ is the imaginary unit.
The $L^2$ inner product is denoted
by 
$(v,w)_{L^2(\Omega)}:=\int_\Omega v\bar w\,dx$. 
The Sobolev space of complex-valued $L^p$ functions over a
domain $\omega$ whose 
generalized derivatives up to order $k$ belong to $L^p$ is
denoted by $W^{k,p}(\omega;\mathbb C)$.
The notation $A\lesssim B$ abbreviates $A\leq C B$ for some
constant $C$ that is independent of the mesh-size,
the wave number $\kappa$, and all further parameters in the method like
the oversampling parameter $m$ or the fine-scale mesh-size $h$;
$A\approx B$ abbreviates $A\lesssim B\lesssim A$.

\section{The Helmholtz Problem}\label{s:Helmholtz}

Let $\Omega\subseteq\mathbb R^d$, for $d\in\{1,2,3\}$, be an open
bounded domain with polyhedral Lipschitz boundary which is decomposed
into disjoint parts $\partial\Omega =\Gamma_D \cup\Gamma_R$ with
$\Gamma_D$ closed.
The classical Helmholtz equation then reads
\begin{equation}\label{e:HelmholtzClassic}
\begin{aligned}
 -\Delta u - \kappa^2 u &&=&& & f &&\text{in } \Omega, \\
 u && = && & u_D && \text{on } \Gamma_D, \\
 i\kappa u -\nabla u \cdot \nu && = && & g && \text{on } \Gamma_R
\end{aligned}
\end{equation}
for the outer unit normal $\nu$ of $\Omega$
and the real parameter $\kappa>0$.
For the sake of a simple exposition we assume $u_D=0$.
Either of the parts $\Gamma_D$ or $\Gamma_R$ is allowed to be
the empty set.
In scattering problems, the Dirichlet boundary $\Gamma_D$ typically
refers to the boundary of a bounded sound-soft object
whereas the Robin boundary $\Gamma_R$ arises from artificially
truncating the full space $\mathbb R^d$ to the bounded domain 
$\Omega$ \cite{Ihlenburg1998}.
The variational formulation of \eqref{e:HelmholtzClassic} employs
the space
\begin{equation*}
V
:=W^{1,2}_D(\Omega;\mathbb C)
:=\{ v\in W^{1,2}(\Omega;\mathbb C) \,:\, v|_{\Gamma_D} = 0\} .
\end{equation*}
For any subset $\omega\subseteq\Omega$ we define the norm
\begin{equation*}
\| v \|_{V,\omega} :=
\sqrt{ \kappa^2 \| v\|_{L^2(\omega)}^2 + \|\nabla v\|_{L^2(\omega)}^2 }
\quad\text{for any } v\in V
\end{equation*}
and denote $\| v \|_V:=\| v \|_{V,\Omega} $.
Define on $V$ the following sesquilinear form
\begin{equation*}
a(v,w):= (\nabla v,\nabla w)_{L^2(\Omega)}
        - \kappa^2(v,w)_{L^2(\Omega)}
        - i\kappa(v,w)_{L^2(\Gamma_R)}.
\end{equation*}
Although the results of this paper hold for a rather general right-hand
side in the dual of $V$, we focus on
data $f\in L^2(\Omega;\mathbb C)$ and 
$g\in L^2(\Gamma_R;\mathbb C)$ for the ease of presentation.
The weak form of the Helmholtz problem then seeks $u\in V$ such that
\begin{equation}\label{e:HelmholtzWeak}
a(u,v) =
(f,v)_{L^2(\Omega)} + (g,v)_{L^2(\Gamma_R)}
\quad\text{for all } v\in V.
\end{equation}
We assume that the problem is polynomially well-posed 
\cite{MelenkEsterhazy} in the sense that there
exists some constant $\gamma(\kappa,\Omega)$ which depends
polynomially on $\kappa$ such that
\begin{equation}\label{e:helmholtzstability}
\gamma(\kappa,\Omega)^{-1}
  \leq
  \inf_{v\in V\setminus\{0\}} \sup_{w\in V\setminus\{0\}} 
   \frac{\Re a(v,w)}{\|v\|_V\|w\|_V} .
\end{equation}
For instance, in the particular case of pure impedance boundary conditions
$\partial\Omega=\Gamma_R$, it was proved in 
\cite{melenk_phd,feng} 
by employing a technique of \cite{makridakis} that 
$\gamma(\kappa,\Omega)\lesssim \kappa$.
Further setups allowing for polynomially well-posedness are described in
\cite{hetmaniuk,MelenkEsterhazy,hiptmair}.
In particular, the case of a medium described by a convex domain
(with Robin boundary conditions on the outer part of the boundary)
and a star-shaped scatterer (with Dirichlet boundary conditions)
allows for polynomial well-posedness \cite{hetmaniuk}.
Another admissible setting is described in \cite{MelenkEsterhazy}
where $\Omega$ is a bounded Lipschitz domain with pure
Robin boundary.
For general configurations, however, the dependence of 
the stability constant $\gamma(\kappa,\Omega)$
from \eqref{e:helmholtzstability} is an open question.
Throughout this paper we assume that \eqref{e:helmholtzstability}
is satisfied.
The case of a possible exponential dependence \cite{betcke} is
excluded here.

\section{The Method}\label{s:method}
This section introduces the notation on finite element spaces and
meshes and defines the multiscale Petrov-Galerkin method 
(msPGFEM) for the Helmholtz problem.

\subsection{Meshes and Data Structures}
Let $\G_H$ be a regular partition of $\Omega$ into 
intervals, parallelograms, parallelepipeds for
$d=1,2,3$, respectively, such that $\cup\G_H =\overline\Omega$
and any two distinct $T,T'\in\G_H$ are either disjoint or share
exactly one lower-dimensional hyper-face
(that is a vertex or an edge for $d\in\{2,3\}$ or a face
for $d=3$).
We impose shape-regularity in the sense that the aspect ratio of the
elements in $\G_H$ is uniformly bounded.
Since we are considering quadrilaterals (resp. hexahedra) with
parallel faces, this guarantees the non-degeneracy of the elements in $\G_H$.
We consider this type of partitions for the sake of a simple presentation
and to exploit the structure to increase the computational efficiency.
The theory of this paper carries over to simplicial triangulations
or to more general quadrilateral or hexahedral
partitions satisfying suitable non-degeneracy conditions
or even to meshless methods based on proper partitions of unity
\cite{HMP14}.

Given any subdomain $S\subseteq\overline\Omega$, define its neighbourhood
via
\begin{equation*}
\nei(S):=\operatorname{int}
          \Big(\cup\{T\in\G_H\,:\,T\cap\overline S\neq\emptyset  \}\Big).
\end{equation*}
Furthermore, we introduce for any $m\geq 2$ the patches
\begin{equation*}
\nei^1(S):=\nei(S)
\qquad\text{and}\qquad
\nei^m(S):=\nei(\nei^{m-1}(S)) .
\end{equation*}
The shape-regularity implies that there is a uniform bound 
$\Colm=\Colm(d)$
on the number of elements in the $m$th-order patch,
\begin{equation*}
\max_{T\in\G_H}\card\{ K\in\G_H\,:\, K\subseteq \overline{\nei^m(T)}\}
\leq \Colm.
\end{equation*}
We abbreviate $\Col:=C_{\mathrm{ol},1}$.
Throughout this paper, we assume that the coarse-scale mesh $\G_H$
is quasi-uniform. This implies that $\Colm$ depends polynomially
on $m$.
The global mesh-size reads 
$H:=\max\{\operatorname{diam}(T):T\in\G_H\}$.
Let $Q_p(\G_H)$ denote the space of piecewise polynomials of partial
degree $\leq p$.
The space of globally continuous piecewise first-order polynomials reads
\begin{equation*}
\cS^1(\G_H):= C^0(\Omega)\cap Q_1(\G_H).
\end{equation*}
The standard $Q_1$ finite element space reads
\begin{equation*}
V_H:=\cS^1(\G_H) \cap V.
\end{equation*}
The set of free vertices (the degrees of freedom) is denoted by
$$
  \N_H:=\{z\in\overline\Omega\,:\, 
           z\text{ is a vertex of }\G_H\text{ and }z\notin\Gamma_D\}.
$$
Let $I_H:V\to V_H$ be a 
surjective
quasi-interpolation operator that
acts as a stable quasi-local projection in the sense that
$I_H\circ I_H = I_H$ and that
for any $T\in\G_H$ and all $v\in V$ there holds
\begin{equation}\label{e:IHapproxstab}
H^{-1}\|v-I_H v\|_{L^2(T)} + \|\nabla I_H v \|_{L^2(T)}
\leq \CIH \|\nabla v\|_{L^2(\nei(T))} .
\end{equation}
Under the mesh condition that $\kappa H \lesssim 1$ is bounded by a generic constant,
this implies  stability in the $\|\cdot\|_V$ norm
\begin{equation}\label{e:IHapproxstabV}
\|I_H v\|_V \leq C_{I_H,V} \|v\|_V
\quad\text{for all } v\in V,
\end{equation}
with a $\kappa$-independent constant $C_{I_H,V}$.
One possible choice (which we use in our implementation of the method)
is to define $I_H:=E_H\circ\Pi_H$, where
$\Pi_H$ is the piecewise $L^2$ projection onto $Q_1(\G_H)$
and $E_H$ is the averaging operator that maps $Q_1(\G_H)$ to $V_H$ by
assigning to each free vertex the arithmetic mean of the corresponding
function values of the neighbouring cells, that is, for any $v\in Q_1(\G_H)$
and any free vertex $z\in\N_H$,
\begin{equation*}
(E_H(v))(z) =
           \sum_{\substack{T\in\G_H\\\text{with }z\in T}}v|_T (z) 
           \bigg/
           \card\{K\in\G_H\,:\,z\in K\}.
\end{equation*}
Note that $E_H(v)|_{\Gamma_D} = 0$ by construction.
For this choice,
the proof of \eqref{e:IHapproxstab} follows from combining the
well-established approximation and stability properties of 
$\Pi_H$ and $E_H$, see, e.g., \cite{ern}.

\subsection{Definition of the Method}\label{ss:defMethod}

The method is determined by three parameters,
namely the coarse-scale mesh-size $H$, and the stabilization parameters 
$h$ (the fine-scale mesh-size) and $m$ (the oversampling parameter)
which are explained in the following.
We assign to any $T\in\G_H$ its $m$-th order patch
$\Omega_T:=\nei^m(T)$ (for a positive integer $m$)
and define for any $v,w\in V$ the localized sesquilinear forms
\begin{equation*}
a_{\Omega_T}(v,w):= (\nabla v,\nabla w)_{L^2(\Omega_T)}
        - (\kappa^2 v,w)_{L^2(\Omega_T)}
        - i(\kappa v,w)_{L^2(\Gamma_R\cap\partial\Omega_T)}
\end{equation*}
and
\begin{equation*}
a_T(v,w):= (\nabla v,\nabla w)_{L^2(T)}
        - (\kappa^2 v,w)_{L^2(T)}
        - i(\kappa v,w)_{L^2(\Gamma_R\cap\partial T)}.
\end{equation*}
Let $\G_h$ be a global uniform refinement of the mesh $\G_H$ over
$\Omega$ and define
\begin{equation*}
V_h(\Omega_T) 
 := \{ v\in Q_1(\G_h) \cap V\,: v=0\text{ outside }\Omega_T\} .
\end{equation*}
Define the null space
\begin{equation*}
W_h(\Omega_T) := \{ v_h\in V_h(\Omega_T) \,:\, I_H(v_h) = 0\}
\end{equation*}
of the quasi-interpolation operator $I_H$ defined in the previous section.
Given any nodal basis function $\Lambda_z\in V_H$,
let 
$\lambda_{z,T}\in W_h(\Omega_T)$
solve the subscale corrector problem
\begin{equation}\label{e:lambdacorrectorproblem}
a_{\Omega_T}(w,\lambda_{z,T}) = a_T(w,\Lambda_z)
\quad\text{for all } w\in W_h(\Omega_T).
\end{equation}
The well-posedness of \eqref{e:lambdacorrectorproblem} will
be proved in Section~\ref{s:erroranalysis}.
Let $\lambda_z:=\sum_{T\in\G_H} \lambda_{z,T}$
and define the test function
\begin{equation*}
\widetilde\Lambda_z := \Lambda_z -  \lambda_z.
\end{equation*}
The space of test functions then reads
\begin{equation*}
\widetilde V_H := \operatorname{span}\{\widetilde\Lambda_z\,:\,z\in\N_H\} .
\end{equation*}
We emphasize that the dimension
$\dim V_H = \dim \widetilde V_H$
is independent of the parameters $m$ and $h$.
Figures~\ref{f:corrector1D}--\ref{f:corrector2D} display 
typical examples for the test functions $\widetilde\Lambda_z$
and correctors.
The multiscale Petrov-Galerkin FEM seeks $u_H\in V_H$ such that
\begin{equation}\label{e:discreteproblem}
a(u_H,\tilde v_H) = 
(f,\tilde v_H)_{L^2(\Omega)}
  + (g,\tilde v_H)_{L^2(\Gamma_R)}
\quad\text{for all } \tilde v_H\in \widetilde V_H.
\end{equation}

The error analysis and the numerical experiments will show that
the choice $H\lesssim\kappa^{-1}$, $m\approx\log(\kappa)$
suffices to guarantee stability and 
quasi-optimality
properties,
provided that $\kappa^\alpha h\lesssim 1$ where $\alpha$ depends on the
stability and regularity of the continuous problem.
The conditions on $h$ are the same as for the standard $Q_1$ FEM on the
global fine scale
(e.g.\ $\kappa^{3/2}h\lesssim 1$ for stability \cite{Wu2014CIPFEM}
and $\kappa^2h\lesssim 1$ for quasi-optimality \cite{melenk_phd}
in the case of pure Robin boundary conditions on a convex domain).
      
\subsection{Remarks on Generalizations of the Method}

The present approach exploits additional structure in the mesh
and thereby drastically decreases the cost for the
computation of the test functions 
$(\widetilde\Lambda_z\,:\,z\in\N_H)$.
Indeed, \eqref{e:lambdacorrectorproblem} is translation-invariant
and, thus, the number of corrector problems to be solved is
determined by the number of patch configurations. This number is
typically much smaller than the number of elements in $\G_H$,
see Figure~\ref{f:configurations} for an illustration.

Some remarks on more general versions of the presented msPGFEM
are in order.

\paragraph{Element shapes.}
As Figure~\ref{f:configurations} illustrates, highly structured
meshes are desirable as they lead to a moderate number of patch problems.
The method presented in Subsection~\ref{ss:defMethod} considers,
for simplicity, a partition of the domain in parallelepipeds.
While in scattering problems the outer part of the boundary $\Gamma_R$
results from a truncation of the full space and, hence, the choice of a
simple geometry (e.g., a cube) is justified,
it is extremely important to guarantee an accurate representation of
more general scattering objects.
This requires
more general element shapes such as isoparametric elements or
partitions in bricks and simplices with first-order ansatz functions
on the reference cell (see \cite{Peterseim2014} for simplicial meshes).
The msPGFEM and its error analysis is also applicable to this situation.
The configurations at the boundary will then determine the number of
corrector problems.

\paragraph{Fine-scale grid.}
The present approach is based on a global fine-scale grid
$\G_h$ and a particular choice of the domains $\Omega_T$,
which is convenient for the implementation of the method.
It is, however, not necessary for the domains $\Omega_T$ to be 
aligned with the mesh $\G_H$. Also the spaces $W_h(\Omega_T)$
can be defined over independent fine-scale meshes over $\Omega_T$.

\paragraph{Adaptive methods.}
For certain configurations of the domains $\Omega_T$, for instance in
the presence of re-entrant corners, it may be desirable to utilize
an adaptive fine-scale mesh over $\Omega_T$ for the solution of
the corrector problem \eqref{e:lambdacorrectorproblem}.
As proven in Lemma~\ref{l:wellposedideal} below, the corrector problems
are coercive and mesh-adaptation may improve the efficiency of the
fine-scale corrector problem.
As mentioned in the previous remark, it is indeed possible to
employ independent fine-scale meshes over different domains $\Omega_T$,
$\Omega_K$. 
The stability and error analysis for the adaptive case,
which are expected to be more involved,
are not discussed in this paper.

\begin{figure}
\begin{center}
\includegraphics[width=0.32\textwidth]{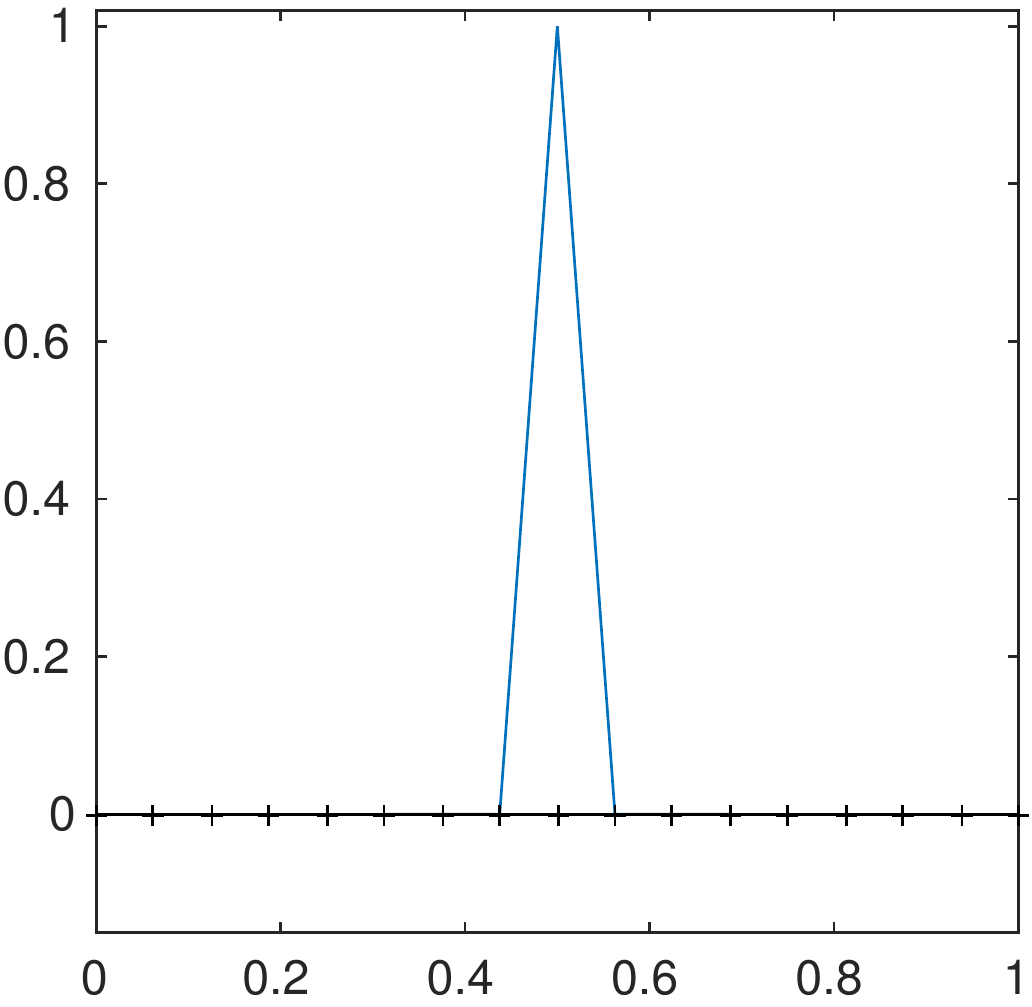}
\includegraphics[width=0.32\textwidth]{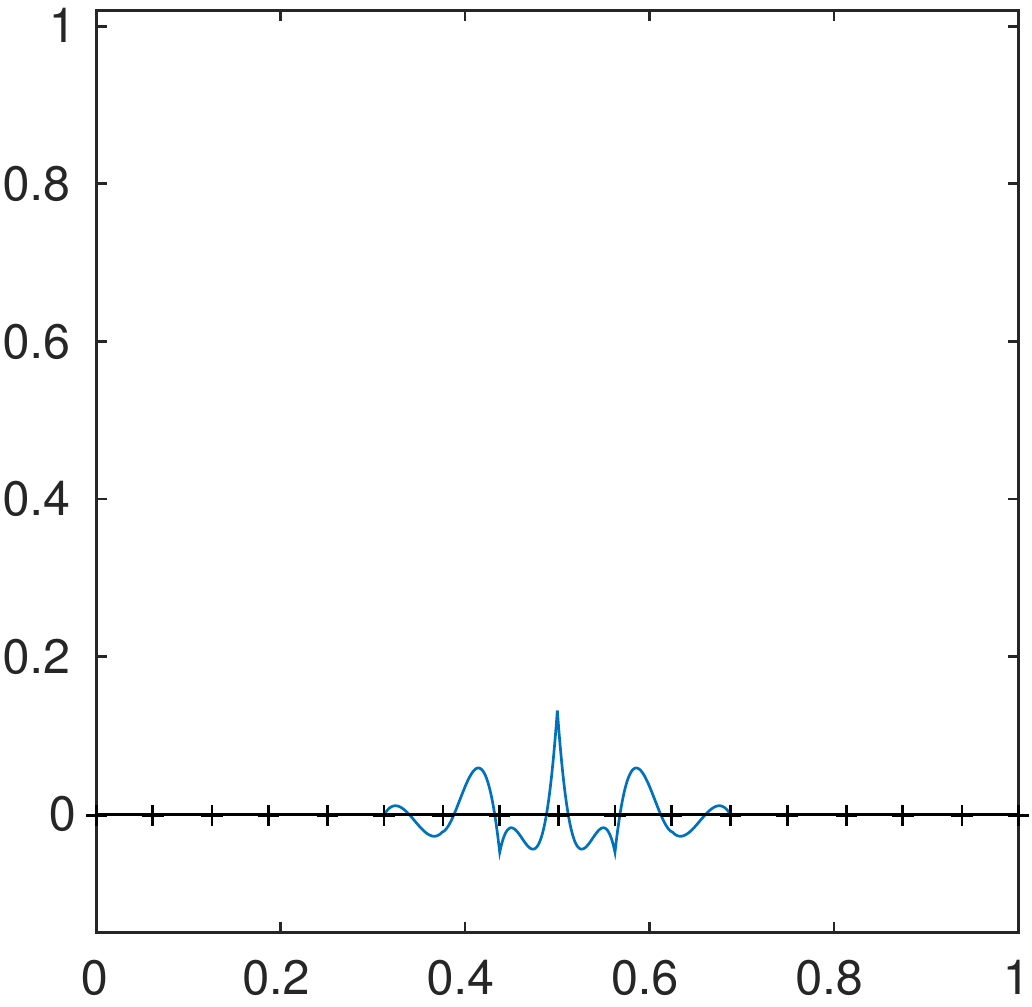}
\includegraphics[width=0.32\textwidth]{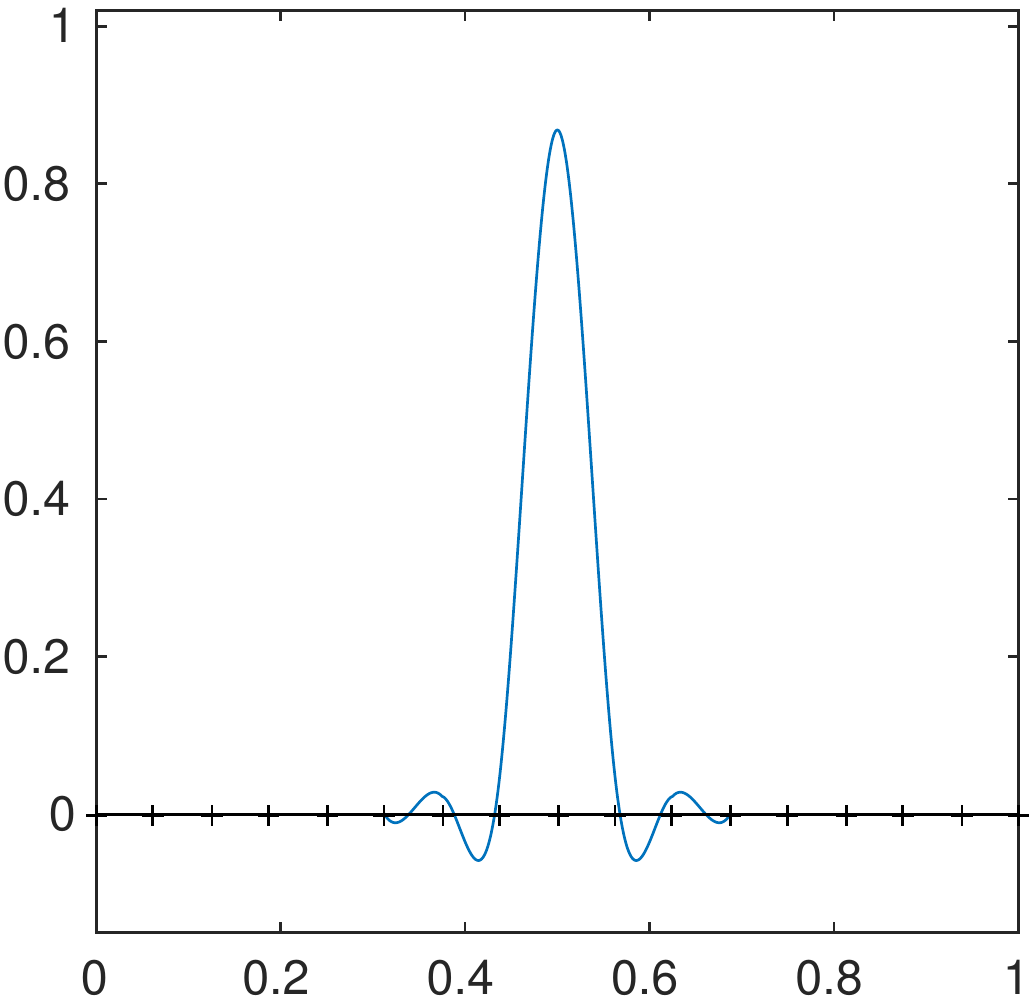}
\end{center}
\caption{Coarse-scale trial function $\Lambda_z$ (left),
         corrector $\lambda_z$ (middle),
         and modified test function 
         $\widetilde\Lambda_z=\Lambda_z-\lambda_z$ (right)
         in 1D
         with $\kappa=2^5$, $H=2^{-4}$, $h=2^{-10}$, $m=2$.
         \label{f:corrector1D}
         }
\end{figure}

\begin{figure}
\begin{center}
\includegraphics[width=0.49\textwidth]{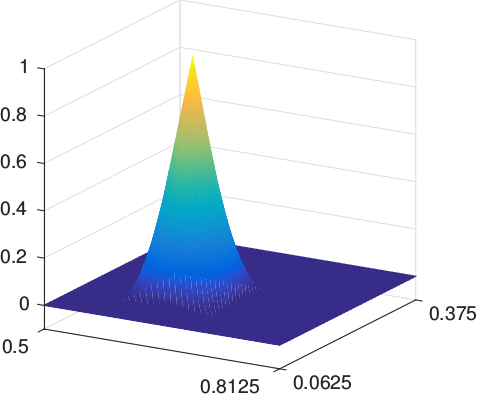}
\includegraphics[width=0.49\textwidth]{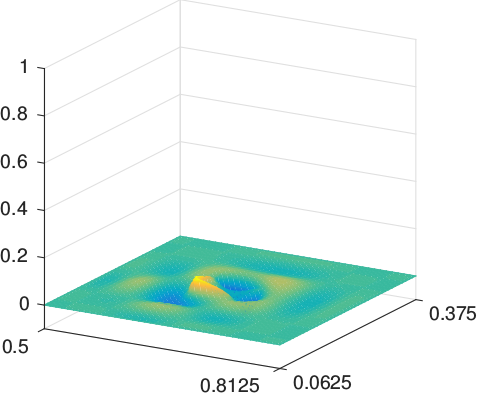}
\end{center}
\caption{Coarse-scale trial function $\Lambda_z$ (left),
         and element corrector $\lambda_{z,T}$ (right)
         in 2D
         with $\kappa=2^5$, $H=2^{-4}$, $h=2^{-7}$, $m=2$
         for the patch highlighted in Figure~\ref{f:configurations}.
         \label{f:corrector2D}
         }
\end{figure}

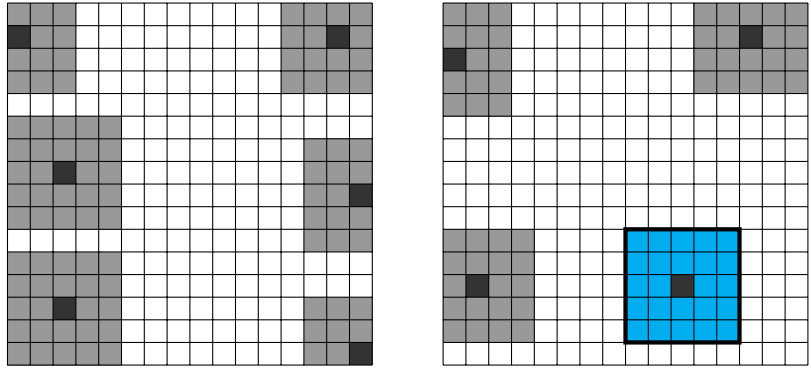
\begin{figure}
\begin{center}
\begin{tikzpicture}[scale=0.3]
\fill[black!40!white] (0,0) rectangle (5,5);
\fill[black!80!white] (2,2) rectangle (3,3);

\fill[black!40!white] (0,6) rectangle (5,11);
\fill[black!80!white] (2,8) rectangle (3,9);

\fill[black!40!white] (13,0) rectangle (16,3);
\fill[black!80!white] (15,0) rectangle (16,1);

\fill[black!40!white] (12,12) rectangle (16,16);
\fill[black!80!white] (14,14) rectangle (15,15);

\fill[black!40!white] (13,5) rectangle (16,10);
\fill[black!80!white] (15,7) rectangle (16,8);

\fill[black!40!white] (0,12) rectangle (3,16);
\fill[black!80!white] (0,14) rectangle (1,15);

\draw[step=1.0,black,thin] (0,0) grid (16,16);
\end{tikzpicture}
\hspace{4ex}
\begin{tikzpicture}[scale=0.3]
\fill[cyan] (8,1) rectangle (13,6);
\draw[black,ultra thick] (8,1) rectangle (13,6);
\fill[black!80!white] (10,3) rectangle (11,4);

\fill[black!40!white] (0,1) rectangle (4,6);
\fill[black!80!white] (1,3) rectangle (2,4);

\fill[black!40!white] (0,11) rectangle (3,16);
\fill[black!80!white] (0,13) rectangle (1,14);

\fill[black!40!white] (11,12) rectangle (16,16);
\fill[black!80!white] (13,14) rectangle (14,15);

\draw[step=1.0,black,thin] (0,0) grid (16,16);
\end{tikzpicture}
\end{center}
\caption{All possible patch configurations (up to rotations)
         on a structured mesh of
         the square domain with pure Robin boundary with
         $m=2$.
         A trial function and corresponding corrector for the highlighted
         patch is depicted in Figure~\ref{f:corrector2D}.
         \label{f:configurations}
         }
\end{figure}

\begin{figure}
\begin{center}
\includegraphics[width=.49\textwidth]{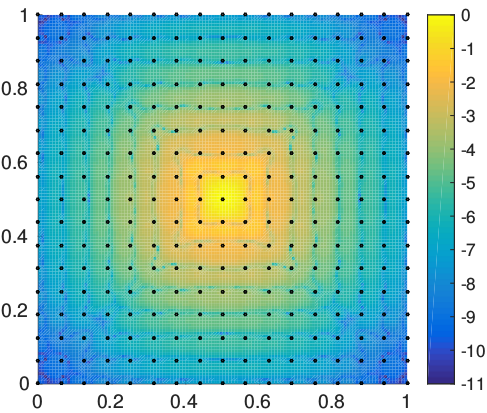}
\includegraphics[width=.49\textwidth]{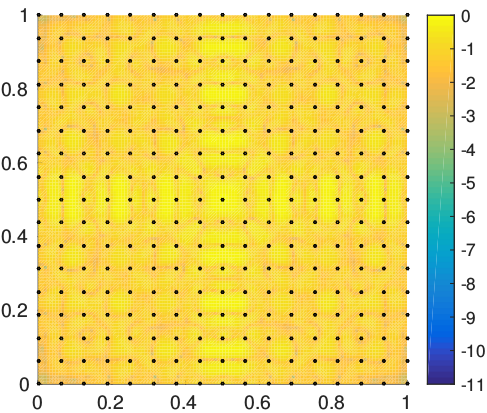}
\end{center}
\caption{Modulus of the idealized test function 
         $\widetilde\Lambda_z$ for $m=\infty$, $H=2^{-4}$, $h=2^{-7}$ in 2D
         in a logarithmically scaled plot.
         The dots indicate the grid points of the coarse mesh.
         Left: $\kappa=2^5$; right: $\kappa=2^6$.
         \label{f:testfct2Ddensity}
         }
\end{figure}

\section{Error Analysis}\label{s:erroranalysis}

We denote the global finite element space on the fine scale by
$V_h:=V_h(\Omega)=\cS^1(\G_h)\cap V$.
We denote the solution operator of the element corrector problem 
\eqref{e:lambdacorrectorproblem} by $\Cor_{T,m}$. 
Then any $z\in\N_H$ and any $T\in\G_H$ satisfy
$\lambda_{z,T} = \Cor_{T,m}(\Lambda_z)$ and we refer to
$\Cor_{T,m}$ as element correction operator.
The map $\Lambda_z\mapsto \lambda_z$
described in Subsection~\ref{ss:defMethod} defines a linear operator
$\Cor_m$ via $\Cor_m(\Lambda_z)=\lambda_z$ for any $z\in\N_H$,
referred to as correction operator.
For the analysis we introduce idealized counterparts of these
correction operators where the patch $\Omega_T$ equals $\Omega$.
Define the null space space
$W_h := \{ v\in V_h\,:\, I_H(v) = 0\}$.
For any $v\in V$, the idealized element corrector problem seeks
$\Cor_{T,\infty} v\in W_h$ such that
\begin{equation}\label{e:idealElementCorrProb}
a(w,\Cor_{T,\infty} v) = a_T(w,v)\quad\text{for all }w\in W_h.
\end{equation}
Furthermore, define
\begin{equation}\label{e:idealCorrector}
\Cor_\infty v:=\sum_{T\in\G_H} \Cor_{T,\infty} v.
\end{equation}

It is proved in \cite[Corollary~3.2]{MS10} that
the form $a$ is continuous and there is a
constant $C_a$ such that
\begin{equation*}
a(v,w) \leq C_a \|v\|_V \|w\|_V
\quad\text{for all } v,w\in V.
\end{equation*}
The following result implies the well-posedness of the 
idealized
corrector problems.

\begin{lemma}[well-posedness of the 
idealized
corrector problems]
\label{l:wellposedideal}
Provided 
\begin{equation}\label{e:resolution}
\CIH\sqrt{\Col} H\kappa \leq 1/\sqrt2,
\end{equation}
we have for all $w\in W_h$ equivalence of norms
\begin{equation*} 
\|\nabla w\|_{L^2(\Omega)} 
\leq \|w\|_V 
\leq \sqrt{3/2}\, \|\nabla w\|_{L^2(\Omega)}
\end{equation*}
and ellipticity
\begin{equation*} 
\frac12 \|\nabla w\|_{L^2(\Omega)}^2 \leq \Re a(w,w) .
\end{equation*}
\end{lemma}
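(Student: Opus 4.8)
The plan is to exploit that every $w\in W_h$ satisfies $I_Hw=0$, so that the approximation estimate \eqref{e:IHapproxstab} collapses to a local Poincar\'e-type bound $\|w\|_{L^2(T)}\le \CIH H\,\|\nabla w\|_{L^2(\nei(T))}$ on each coarse element $T\in\G_H$. First I would square these elementwise inequalities and sum over $T$, using the finite-overlap property of the first-order patches (each element of $\G_H$ lies in at most $\Col$ of the neighbourhoods $\nei(T)$) to obtain the global bound $\|w\|_{L^2(\Omega)}\le \CIH\sqrt{\Col}\,H\,\|\nabla w\|_{L^2(\Omega)}$.

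Squaring the resolution condition \eqref{e:resolution} yields $\kappa^2\CIH^2\Col H^2\le 1/2$, hence $\kappa^2\|w\|_{L^2(\Omega)}^2\le \tfrac12\|\nabla w\|_{L^2(\Omega)}^2$. The norm equivalence is then immediate: the bound $\|\nabla w\|_{L^2(\Omega)}\le\|w\|_V$ holds trivially from the definition of $\|\cdot\|_V$, while $\|w\|_V^2=\kappa^2\|w\|_{L^2(\Omega)}^2+\|\nabla w\|_{L^2(\Omega)}^2\le\tfrac32\|\nabla w\|_{L^2(\Omega)}^2$ gives the upper bound $\sqrt{3/2}$.

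For the ellipticity estimate I would note that $(w,w)_{L^2(\Gamma_R)}=\|w\|_{L^2(\Gamma_R)}^2$ is real and nonnegative, so the impedance term $-i\kappa(w,w)_{L^2(\Gamma_R)}$ in $a(w,w)$ is purely imaginary and does not contribute to $\Re a(w,w)$. Therefore $\Re a(w,w)=\|\nabla w\|_{L^2(\Omega)}^2-\kappa^2\|w\|_{L^2(\Omega)}^2\ge\|\nabla w\|_{L^2(\Omega)}^2-\tfrac12\|\nabla w\|_{L^2(\Omega)}^2=\tfrac12\|\nabla w\|_{L^2(\Omega)}^2$, as claimed.

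I do not expect a genuine obstacle here; the one place that needs a little care is the passage from the elementwise estimate to the global one, where the overlap constant $\Col$ must be tracked correctly rather than summing naively. With the lemma established, well-posedness of the (idealized) corrector problems \eqref{e:lambdacorrectorproblem} and \eqref{e:idealElementCorrProb} follows from the Lax--Milgram theorem, since $W_h(\Omega_T)\subseteq W_h$ is a closed subspace of $V$ on which the relevant sesquilinear form is coercive by the computation above (note $\Gamma_R\cap\partial\Omega_T$ contributes only a purely imaginary term), and $a$ is bounded with constant $C_a$.
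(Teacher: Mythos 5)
Your proof is correct and follows essentially the same route as the paper: the paper's proof consists of the single estimate $\kappa^2\|w\|_{L^2(\Omega)}^2=\kappa^2\|(1-I_H)w\|_{L^2(\Omega)}^2\leq \CIH^2\Col H^2\kappa^2\|\nabla w\|_{L^2(\Omega)}^2$ obtained from \eqref{e:IHapproxstab}, with the elementwise summation, the overlap count, the norm equivalence, and the purely imaginary impedance term all left implicit. You have simply supplied those routine details, and they are all handled correctly.
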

\begin{proof}
For any $w\in W_h$ the property \eqref{e:IHapproxstab} implies
\begin{equation*}
\kappa^2 \|w\|_{L^2(\Omega)}^2 
= \kappa^2 \|(1-I_H)w\|_{L^2(\Omega)}^2
\leq
\CIH^2\Col H^2\kappa^2 \|\nabla w\|_{L^2(\Omega)}^2.
\qedhere
\end{equation*}
\end{proof}

Lemma~\ref{l:wellposedideal} implies that the idealized corrector
problems \eqref{e:idealCorrector}
are well-posed and the correction operator $\Cor_\infty$
is continuous in the sense that 
\begin{equation*}
\|\Cor_\infty v_H \|_V \leq C_\Cor \|v_H\|_V
\quad\text{for all } v_H \in V_H
\end{equation*}
for some constant $C_\Cor \approx 1$.
Since 
the inclusion $W_h(\Omega_T)\subseteq W_h$ holds,
the well-posedness result of Lemma~\ref{l:wellposedideal} carries over
to the corrector problems \eqref{e:lambdacorrectorproblem}
in the subspace $W_h(\Omega_T)$
with the sesquilinear form $a_{\Omega_T}$.

The proof of well-posedness of the Petrov-Galerkin method
\eqref{e:discreteproblem} will be
based on the fact that the difference $(\Cor_\infty-\Cor_m)(v)$
decays exponentially with the distance from $\supp(v)$.
In the next theorem, we quantify the difference between the
idealized and the discrete correctors.
The proof will be given in Appendix~\ref{a:proof} of this paper
and is based on the exponential decay of the corrector $\Cor_\infty \Lambda_z$
itself, see Figure~\ref{f:testfct2Ddensity}.
That figure also illustrates that the decay requires the resolution
condition \eqref{e:resolution}, namely $\kappa H \lesssim 1$.

\begin{theorem}\label{t:CorrCloseness}
Under the resolution condition \eqref{e:resolution}
there exist constants 
$C_1\approx 1 \approx C_2$ and $0<\beta<1$ such that
any $v\in V_H$, any $T\in\G_H$ and any $m\in\mathbb N$ satisfy
\begin{align}
\label{e:CorrCloseness1}
\|\nabla(\Cor_{T,\infty} v - \Cor_{T,m} v)\|_{L^2(\Omega)}
&
\leq C_1
 \beta^m \|\nabla v \|_{L^2(T)},
\\
\label{e:CorrCloseness2}
\|\nabla(\Cor_\infty v - \Cor_m v)\|_{L^2(\Omega)}
&
\leq C_2 \sqrt{\Colm}
\beta^m \|\nabla v \|_{L^2(\Omega)}.
\end{align}
\qed
\end{theorem}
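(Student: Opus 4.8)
The plan is to prove the single-element estimate \eqref{e:CorrCloseness1} first and then derive the global estimate \eqref{e:CorrCloseness2} by summing with the help of the finite-overlap bound $\Colm$. The core of the matter is the exponential decay of the \emph{idealized} corrector $\Cor_{T,\infty}v$ away from the element $T$. So the first step is to establish a Caccioppoli-type / iterated-cutoff argument: fix $v\in V_H$ with $\supp v\subseteq T$ (by linearity it suffices to treat a single nodal basis function, or even just to work with the localized form $a_T(\cdot,v)$), set $\phi:=\Cor_{T,\infty}v$, and let $\eta$ be a scalar cutoff that equals $0$ on $\nei^{m-1}(T)$ and $1$ outside $\nei^m(T)$, with $\|\nabla\eta\|_{L^\infty}\lesssim H^{-1}$. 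The subtlety compared with the elliptic case is that $a$ is \emph{not} coercive on all of $V$; this is exactly where Lemma~\ref{l:wellposedideal} enters, since $\phi$ and all the test functions we plug in lie in $W_h$, on which $\tfrac12\|\nabla\cdot\|_{L^2(\Omega)}^2\le\Re a(\cdot,\cdot)$ and the $V$-norm is equivalent to the gradient norm under the resolution condition \eqref{e:resolution}.

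The mechanism I would use is the standard one for the Localized Orthogonal Decomposition: one cannot simply test \eqref{e:idealElementCorrProb} with $\eta^2\phi$ because that is not in $W_h$ (it is not in the kernel of $I_H$, and it is piecewise $Q_1$ only after interpolation). Instead, introduce the correction $z_h:=(1-I_H)(\eta^2\phi)\in W_h$; using \eqref{e:idealElementCorrProb} and the fact that $\eta\equiv 1$ on the support of the "far" part of $\phi$ while $a_T(\cdot,v)$ is supported in $T$ (which is disjoint from that region once $m\ge 2$), one gets $a(z_h,\phi)=0$. Expanding $a((1-I_H)(\eta^2\phi),\phi)=0$ and isolating $\Re a(\eta\phi,\eta\phi)$ produces, after using the product rule on $\nabla(\eta^2\phi)$, the Cauchy--Schwarz inequality, the local stability/approximation bound \eqref{e:IHapproxstab} for $I_H$, and the $L^\infty$-bound on $\nabla\eta$, an estimate of the form
\begin{equation*}
\|\nabla\phi\|_{L^2(\Omega\setminus\nei^m(T))}^2
\;\lesssim\;
\|\nabla\phi\|_{L^2(\nei^m(T)\setminus\nei^{m-1}(T))}^2 ,
\end{equation*}
i.e. the energy in the far field is controlled by the energy in a single annulus of patches. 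Writing $b_j:=\|\nabla\phi\|_{L^2(\Omega\setminus\nei^{j}(T))}^2$, this reads $b_m\lesssim b_{m-1}-b_m$, hence $b_m\le (1+C)^{-1}b_{m-1}$, and iterating gives $b_m\le \beta^{2m}\|\nabla\phi\|_{L^2(\Omega)}^2$ with $\beta:=(1+C)^{-1/2}\in(0,1)$, a constant depending only on $\CIH$, $\Col$ and $d$ (through $\Colm$ for fixed small $m$), all of which are $\approx 1$. Finally, the global stability of $\Cor_\infty$ recorded after Lemma~\ref{l:wellposedideal}, namely $\|\nabla\phi\|_{L^2(\Omega)}\le\|\Cor_\infty v\|_V\lesssim\|v\|_V\approx\|\nabla v\|_{L^2(T)}$ (using $\supp v\subseteq T$ and $\kappa H\lesssim 1$), converts this into exponential decay of $\phi$ in the gradient norm away from $T$.

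With decay of $\Cor_{T,\infty}v$ in hand, the passage to the difference $\Cor_{T,\infty}v-\Cor_{T,m}v$ is a near-best-approximation step: both $\Cor_{T,\infty}v$ and $\Cor_{T,m}v$ solve corrector problems with coercive forms ($a$ on $W_h$, resp. $a_{\Omega_T}$ on $W_h(\Omega_T)\subseteq W_h$), and on the overlap the right-hand sides coincide ($a_T(\cdot,v)$, since $\Omega_T\supseteq T$). A Galerkin-orthogonality/Céa-type argument then bounds $\|\nabla(\Cor_{T,\infty}v-\Cor_{T,m}v)\|_{L^2(\Omega)}$ by the energy of $\Cor_{T,\infty}v$ on $\Omega\setminus\Omega_T$ plus the $W_h$-best-approximation error of the truncated corrector — the standard trick being to test with $(1-I_H)(\tilde\eta\,\Cor_{T,\infty}v)\in W_h(\Omega_{T})$ for a cutoff $\tilde\eta$ supported in $\Omega_T$ and equal to $1$ on $\nei^{m-1}(T)$, which is an admissible test function for \emph{both} problems up to terms living in the annulus. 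All those terms are controlled by $b_{m-1}\lesssim\beta^{2(m-1)}\|\nabla v\|_{L^2(T)}^2$, giving \eqref{e:CorrCloseness1} with $C_1\approx 1$ (after absorbing $\beta^{-2}$ into $C_1$ or shrinking $\beta$ slightly). For \eqref{e:CorrCloseness2}, write $\Cor_\infty v-\Cor_m v=\sum_{T\in\G_H}(\Cor_{T,\infty}v-\Cor_{T,m}v)$; the summands $\Cor_{T,\infty}v-\Cor_{T,m}v$ need not have disjoint supports, but a standard finite-overlap argument (each point of $\Omega$ lies in at most $\Colm$ of the relevant patches, up to a fixed blow-up factor) yields $\|\nabla\sum_T(\cdots)\|_{L^2(\Omega)}^2\lesssim \Colm\sum_T\|\nabla(\Cor_{T,\infty}v-\Cor_{T,m}v)\|_{L^2(\Omega)}^2\lesssim \Colm\,\beta^{2m}\sum_T\|\nabla v\|_{L^2(T)}^2=\Colm\,\beta^{2m}\|\nabla v\|_{L^2(\Omega)}^2$, which is \eqref{e:CorrCloseness2}.

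The main obstacle is the first step: running the Caccioppoli iteration for the \emph{Helmholtz} corrector rather than the Laplace corrector. Two points need care. First, the indefinite term $-\kappa^2(\cdot,\cdot)_{L^2}$ and the boundary term $-i\kappa(\cdot,\cdot)_{L^2(\Gamma_R)}$ must be shown not to spoil coercivity along the iteration — this is precisely what the resolution condition \eqref{e:resolution} buys via Lemma~\ref{l:wellposedideal}, but one must check that the cutoff-corrected test functions $(1-I_H)(\eta^2\phi)$ still lie in $W_h$ so that the lemma applies, and that the $\kappa^2$-term contributes only lower-order (in $\kappa H$) perturbations that can be absorbed. Second, one must ensure the decay rate $\beta$ is genuinely $\kappa$-independent; this again reduces to $\kappa H\lesssim1$ together with the fact that the constants $\CIH,\Col$ in \eqref{e:IHapproxstab} are $\kappa$-independent. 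Once these are pinned down, everything else is routine bookkeeping with cutoffs and the finite-overlap constant.
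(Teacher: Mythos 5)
Your overall strategy coincides with the paper's: first prove exponential decay of the idealized element corrector $\Cor_{T,\infty}v$ away from $T$ by an iterated-cutoff (Caccioppoli-type) argument whose key mechanism is that $(1-I_H)I_h(\eta\,\Cor_{T,\infty}v)\in W_h$ is supported away from $T$ and is therefore $a$-orthogonal to $\Cor_{T,\infty}v$; then obtain \eqref{e:CorrCloseness1} by a C\'ea-type comparison with the truncated function $(1-I_H)I_h(\tilde\eta\,\Cor_{T,\infty}v)\in W_h(\Omega_T)$. These two steps are essentially the paper's Theorem~\ref{t:decay} and the first half of its proof of Theorem~\ref{t:CorrCloseness}, and your remarks on the role of Lemma~\ref{l:wellposedideal}, on the need to re-interpolate $\eta\phi$ into $V_h$, and on the $\kappa$-independence of $\beta$ are on target.

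The gap is in the passage from \eqref{e:CorrCloseness1} to \eqref{e:CorrCloseness2}. You invoke a ``standard finite-overlap argument'' for $\sum_T z_T$ with $z_T:=(\Cor_{T,\infty}-\Cor_{T,m})v$, on the grounds that each point of $\Omega$ lies in at most $\Colm$ of the relevant patches. But $z_T$ is \emph{not} supported in (a bounded blow-up of) $\Omega_T$: only $\Cor_{T,m}v$ is, whereas the idealized corrector $\Cor_{T,\infty}v$ has global support and is merely exponentially decaying. Hence the pointwise Cauchy--Schwarz mechanism behind $\|\sum_T w_T\|^2\le\Colm\sum_T\|w_T\|^2$ does not apply to these summands, and a naive triangle-inequality summation would produce a factor $\sqrt{\card\,\G_H}$ instead of $\sqrt{\Colm}$, which destroys the estimate. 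Closing this requires an extra idea: either (i) the paper's route, which uses the ellipticity of Lemma~\ref{l:wellposedideal} to write $\frac12\|\nabla z\|_{L^2(\Omega)}^2\le\lvert\sum_T a(z,z_T)\rvert$ for $z:=\sum_T z_T$ and then, for each $T$, splits $a(z,z_T)$ by a cutoff into contributions in which only the genuinely local quantity $\|\nabla z\|_{L^2(\nei^{m+c}(T))}$ multiplies $\|z_T\|_V$ --- the finite-overlap bound $\Colm$ is then applied to the patches $\nei^{m+c}(T)$, not to the supports of the $z_T$; or (ii) a refined decay estimate of the form $\|\nabla z_T\|_{L^2(\Omega\setminus\nei^{m+k}(T))}\lesssim\beta^{m+k}\|\nabla v\|_{L^2(T)}$ followed by an almost-orthogonality (Schur-test) summation. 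Neither is routine bookkeeping; as justified, your last step fails.
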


Provided $h$ is chosen fine enough,
the standard FEM over $\G_h$ is stable in the sense that
there exists a constant $C_{\mathrm{FEM}}$
such that with $\gamma(\kappa,\Omega)$ 
from \eqref{e:helmholtzstability} 
there holds
\begin{equation}\label{e:finehstability}
 \big(C_{\mathrm{FEM}}\gamma(\kappa,\Omega) \big)^{-1}
  \leq
  \inf_{v\in V_h\setminus\{0\}}\sup_{w\in V_h\setminus\{0\}} 
      \frac{\Re a(v,w)}{\|v\|_V\|w\|_V} .
\end{equation}
This is actually a condition on the fine-scale parameter $h$.
In general, the requirements on $h$ depend on the stability of the 
continuous problem \cite{melenk_phd}.

\begin{theorem}[well-posedness of the discrete problem]\label{t:wellposed}
Under the resolution conditions \eqref{e:resolution}
and \eqref{e:finehstability}
and the following oversampling condition
\begin{equation}\label{e:mcondition}
m\geq 
\lvert\log\big(\sqrt{6} C_a \sqrt{\Col} \CIH C_{I_H,V} C_2 \sqrt{\Colm} C_{\mathrm{FEM}}\gamma(\kappa,\Omega)\big)\rvert
\Big/\lvert \log(\beta)\rvert,
\end{equation}
problem \eqref{e:discreteproblem} is well-posed and the constant
$C_{\mathrm{PG}}:=2C_{I_H,V}C_\Cor C_{\mathrm{FEM}}$ satisfies
\begin{equation*}
\big(C_{\mathrm{PG}}\gamma(\kappa,\Omega)\big)^{-1}
\leq
 \inf_{v_H\in V_H\setminus\{0\}}
 \sup_{\tilde v_H\in \widetilde V_H\setminus\{0\}}
 \frac{\Re a(v_H,\tilde v_H)}{\|v_H\|_V\|\tilde v_H\|_V} .
\end{equation*}
\end{theorem}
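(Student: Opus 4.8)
The plan is to establish the discrete inf--sup condition by a Schatz-type argument that transfers the inf--sup stability of the standard fine-scale Galerkin method \eqref{e:finehstability} to the Petrov--Galerkin pair $(V_H,\widetilde V_H)$. The key observation is that the modified test space $\widetilde V_H$ is precisely the image of $V_H$ under the operator $(1-\Cor_\infty)$ plus an exponentially small perturbation: writing $\widetilde\Lambda_z=\Lambda_z-\Cor_m\Lambda_z$, we have $\widetilde\Lambda_z=(1-\Cor_\infty)\Lambda_z+(\Cor_\infty-\Cor_m)\Lambda_z$, and the second term is controlled by Theorem~\ref{t:CorrCloseness}. First I would analyze the \emph{idealized} Petrov--Galerkin method whose test space is $(1-\Cor_\infty)V_H\subseteq V_h$, and show it inherits the stability constant of \eqref{e:finehstability} up to the harmless factors $C_{I_H,V}$ and $C_\Cor$.

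The core of the idealized argument runs as follows. Fix $v_H\in V_H\setminus\{0\}$. Since $(1-\Cor_\infty)V_H\subseteq V_h$ and by the defining property \eqref{e:idealElementCorrProb} of the correctors, one checks that $a(v_H,(1-\Cor_\infty)w_h)=a((1-\Cor_\infty)v_H,(1-\Cor_\infty)w_h)$ is not quite what is needed; rather, the right identity to exploit is that for the fine-scale function $z_H:=(1-\Cor_\infty)v_H\in V_h$ one has $a(v_H,\tilde w)=a(v_H,w)$ whenever $w\in V_h$ satisfies $I_H w$ equals the appropriate coarse function, because the correction lives in $W_h=\ker I_H$. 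Concretely, pick $w_h\in V_h$ realizing the fine-scale supremum in \eqref{e:finehstability} for the argument $z_H$, set $\tilde v_H:=(1-\Cor_\infty)I_H w_h\in(1-\Cor_\infty)V_H$, and use that $w_h-\tilde v_H\in W_h$ together with the corrector identity to obtain $\Re a(v_H,\tilde v_H)=\Re a(z_H,w_h)\geq (C_{\mathrm{FEM}}\gamma)^{-1}\|z_H\|_V\|w_h\|_V$. Bounding $\|z_H\|_V\geq C_{I_H,V}^{-1}\|v_H\|_V$ (since $I_H z_H=v_H$) and $\|\tilde v_H\|_V\leq C_\Cor C_{I_H,V}\|w_h\|_V$ via \eqref{e:IHapproxstabV} and the continuity of $\Cor_\infty$ gives the idealized bound with constant $2C_{I_H,V}C_{\mathrm{FEM}}\gamma$ (the factor $2$ absorbs the $C_\Cor\approx 1$ and leaves room for the perturbation step).

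It then remains to pass from $\Cor_\infty$ to $\Cor_m$. Given the idealized test function $\tilde v_H^{\mathrm{id}}=(1-\Cor_\infty)I_Hw_h$, its genuine counterpart is $\tilde v_H=(1-\Cor_m)I_Hw_h\in\widetilde V_H$, and by Theorem~\ref{t:CorrCloseness}, estimate \eqref{e:CorrCloseness2}, the difference satisfies $\|\tilde v_H^{\mathrm{id}}-\tilde v_H\|_V\lesssim\sqrt{3/2}\,C_2\sqrt{\Colm}\,\beta^m\|\nabla I_Hw_h\|_{L^2(\Omega)}\lesssim C_2\sqrt{\Colm}\,\beta^m\CIH C_{I_H,V}\|w_h\|_V$, using Lemma~\ref{l:wellposedideal} to pass between the $\|\cdot\|_V$ and gradient norms on $W_h$ and \eqref{e:IHapproxstab}--\eqref{e:IHapproxstabV} for $I_Hw_h$. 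Continuity of $a$ then yields $|\Re a(v_H,\tilde v_H)-\Re a(v_H,\tilde v_H^{\mathrm{id}})|\leq C_a\|v_H\|_V\cdot\sqrt{6}\,C_a^{-1}\cdots$ — more precisely the error is bounded by $C_a\,C_2\sqrt{\Colm}\,\CIH C_{I_H,V}\beta^m\|v_H\|_V\|w_h\|_V$ times the norm-equivalence constant $\sqrt{3/2}$, and the explicit threshold \eqref{e:mcondition} on $m$ is exactly what forces this perturbation to be at most half of the idealized lower bound $(2C_{I_H,V}C_{\mathrm{FEM}}\gamma)^{-1}\|v_H\|_V\|w_h\|_V$. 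Combining, $\Re a(v_H,\tilde v_H)\geq\tfrac12(2C_{I_H,V}C_{\mathrm{FEM}}\gamma)^{-1}\|v_H\|_V\|w_h\|_V$, and since $\|w_h\|_V\geq C_\Cor^{-1}\|\tilde v_H\|_V$ up to the same exponentially small correction, one arrives at the claimed constant $C_{\mathrm{PG}}=2C_{I_H,V}C_\Cor C_{\mathrm{FEM}}$.

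The main obstacle I anticipate is bookkeeping the constants carefully enough that the stated threshold \eqref{e:mcondition} — with its precise product $\sqrt{6}C_a\sqrt{\Col}\CIH C_{I_H,V}C_2\sqrt{\Colm}C_{\mathrm{FEM}}\gamma$ inside the logarithm — comes out exactly, rather than merely up to an unspecified constant; in particular one must be disciplined about which norm-equivalence factors from Lemma~\ref{l:wellposedideal} (the $\sqrt{3/2}$ and the $\sqrt 2$ hidden in \eqref{e:resolution}) appear at each use of \eqref{e:CorrCloseness2} and \eqref{e:IHapproxstab}. The conceptual content — that the Petrov--Galerkin method with corrected test functions is a stable perturbation of the fine Galerkin method — is straightforward; extracting the sharp $m\approx\log(\kappa)$ coupling is where the care lies.
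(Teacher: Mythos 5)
Your overall architecture coincides with the paper's: take the fine-scale inf--sup witness $w_h$ for a corrected version of $v_H$, use $\tilde v_H=(1-\Cor_m)I_H w_h$ as the test function, split off the idealized part $(1-\Cor_\infty)I_H w_h$, and control the remainder with \eqref{e:CorrCloseness2} so that \eqref{e:mcondition} makes it at most half of the main term. The constant bookkeeping ($\|v_H\|_V=\|I_H z_H\|_V\le C_{I_H,V}\|z_H\|_V$, the factor $\CIH\sqrt{\Col}$ from \eqref{e:IHapproxstab}, $\|\tilde v_H\|_V\le C_\Cor$) also matches.

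However, the central transfer identity you rely on is false as stated, and this is a genuine gap. You set $z_H:=(1-\Cor_\infty)v_H$ and claim $\Re a(v_H,\tilde v_H)=\Re a(z_H,w_h)$ because $w_h-\tilde v_H\in W_h$. Unwinding this requires $a(z_H,w)=0$ for all $w\in W_h$, i.e.\ that $(1-\Cor_\infty)v_H$ is orthogonal to $W_h$ in the \emph{first} slot of $a$. But the corrector identity \eqref{e:idealElementCorrProb}--\eqref{e:idealCorrector} only gives orthogonality in the \emph{second} slot, $a(w,(1-\Cor_\infty)v)=0$ for $w\in W_h$, and the form $a$ is not Hermitian because of the Robin term $-i\kappa(\cdot,\cdot)_{L^2(\Gamma_R)}$ (and more generally because Petrov--Galerkin stability is exactly about this asymmetry). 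The correct object is the \emph{adjoint} corrector: since $a(v,w)=a(\bar w,\bar v)$, the function $\overline{\Cor_\infty(\bar v_H)}\in W_h$ satisfies $a(\overline{\Cor_\infty(\bar v_H)},w)=a(v_H,w)$ for all $w\in W_h$, and the identity that actually holds is
\begin{equation*}
a(v_H,(1-\Cor_\infty)I_Hw_h)=a\bigl(v_H-\overline{\Cor_\infty(\bar v_H)},\,w_h\bigr),
\end{equation*}
so the fine-scale inf--sup \eqref{e:finehstability} must be invoked for the adjoint-corrected function $v_H-\overline{\Cor_\infty(\bar v_H)}$, not for $(1-\Cor_\infty)v_H$. (Your $\|z_H\|_V\ge C_{I_H,V}^{-1}\|v_H\|_V$ argument survives unchanged, since $I_H$ annihilates the adjoint corrector as well.) With this single substitution your perturbation step and the derivation of \eqref{e:mcondition} go through exactly as you describe; without it, the lower bound on $\Re a(v_H,\tilde v_H)$ does not follow.
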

\begin{proof}
Let $u_H\in V_H$ with $\|u_H\|_V =1$.
From \eqref{e:finehstability} we infer that
there exists some $v\in V_h$ with $\|v\|_V =1$ such that
\begin{equation*}
\Re a(u_H -\overline{\Cor_\infty(\bar u_H)},v) 
\geq \big(C_{\mathrm{FEM}}\gamma(\kappa,\Omega) \big)^{-1} \|u_H -\overline{\Cor_\infty(\bar u_H)}\|_V  .
\end{equation*}
It follows from the structure of the sesquilinear form $a$ that
$\overline{\Cor_\infty(\bar u_H)}$ solves the following adjoint
corrector problem
\begin{equation}\label{e:dualCor}
a(\overline{\Cor_\infty(\bar u_H)},w) = a(u_H,w)
\quad\text{for all }w\in W_h,
\end{equation}
cf.\ \cite[Lemma~3.1]{mm_stas_helm3}.
Let $\tilde v_H := (1-\Cor_m) I_H v \in \widetilde V_H$.
We have
\begin{equation} \label{e:stabilityTwoTerms}
a(u_H,\tilde v_H) 
=
a(u_H,(1-\Cor_\infty) I_H v) 
+
a(u_H,(\Cor_\infty-\Cor_m) I_H v) .
\end{equation}
Since $\Cor_\infty$ is a projection onto $W_h$,
we have $(1-\Cor_\infty)(1-I_H) v =0$
and, thus,
$(1-\Cor_\infty)I_H v =(1-\Cor_\infty) v$.
The solution properties \eqref{e:dualCor} of
$\overline{\Cor_\infty(\bar u_H)}$ 
and \eqref{e:idealElementCorrProb}--\eqref{e:idealCorrector}
of $\Cor_\infty v$ prove
$a(u_H,\Cor_\infty v) = a(\overline{\Cor_\infty(\bar u_H)},v)$.
Hence,
\begin{equation*}
\begin{aligned}
\Re a(u_H,(1-\Cor_\infty) I_H v)
&
= \Re a(u_H -\overline{\Cor_\infty(\bar u_H)},v) 
\\
&
\geq
\big(C_{\mathrm{FEM}}\gamma(\kappa,\Omega) \big)^{-1} 
    \|u_H -\overline{\Cor_\infty(\bar u_H)}\|_V .
\end{aligned}
\end{equation*}
Furthermore, the estimate \eqref{e:IHapproxstabV} implies
\begin{equation*}
1=\|u_H\|_V = \|I_H (u_H -\overline{\Cor_\infty(\bar u_H)}) \|_V
\leq
C_{I_H,V}
\|u_H -\overline{\Cor_\infty(\bar u_H)}\|_V .
\end{equation*}
The second term on the right-hand side of \eqref{e:stabilityTwoTerms}
satisfies with $\|u_H\|_V = 1$ and Lemma~\ref{l:wellposedideal} that
\begin{equation*}
\lvert a(u_H,(\Cor_\infty-\Cor_m) I_H v) \rvert
\leq \sqrt{3/2} C_a \|\nabla (\Cor_\infty-\Cor_m) I_H v\|_{L^2(\Omega)}.
\end{equation*}
Altogether, it follows  that
\begin{equation*}
\Re a(u_H,\tilde v_H)
\geq 
\left(
\frac{1}{C_{I_H,V} C_{\mathrm{FEM}}\gamma(\kappa,\Omega) }
-
 \sqrt{\frac32} C_a
\|\nabla(\Cor_\infty-\Cor_m) I_H v \|_{L^2(\Omega)}
\right)
.
\end{equation*}
Theorem~\ref{t:CorrCloseness} and \eqref{e:IHapproxstab} show that
\begin{equation*}
\|\nabla(\Cor_\infty-\Cor_m) I_H v \|_{L^2(\Omega)}
\leq
   C_2 \sqrt{\Colm}\beta^m \|\nabla I_H v\|
\leq
   C_2 \sqrt{\Colm}\CIH\sqrt{\Col}\beta^m .
\end{equation*}
Hence, the condition \eqref{e:mcondition} and
$\| \tilde v_H \|_V = \|(1-\Cor_\infty) v \|_V \leq C_\Cor$
imply the assertion.
\end{proof}

\begin{remark}[adjoint problem]\label{r:adjoint}
Under the assumptions of Theorem~\ref{t:wellposed},
problem \eqref{e:discreteproblem} is well-posed and, thus, it 
follows from a dimension argument that there is  non-degeneracy of the
sesquilinear form $a$ over $V_H\times\widetilde V_H$. 
Thus, the adjoint problem to \eqref{e:discreteproblem}
is well-posed with the same stability constant as in 
Theorem~\ref{t:wellposed}.
\end{remark}

The quasi-optimality result requires the following additional condition
on the oversampling parameter $m$,
\begin{equation}\label{e:oversampling2}
m
\geq
\lvert
\log\Big( 
2 C_2 \sqrt{\Colm} C_a^2 C_{\mathrm{PG}}\gamma(\kappa,\Omega) \sqrt{3/2}
\Big)
\rvert
\Big/ \lvert \log(\beta) \rvert.
\end{equation}

\begin{theorem}[quasi-optimality]\label{t:quasiopt}
 The resolution conditions \eqref{e:resolution} and
 \eqref{e:finehstability}
 and the oversampling
 conditions \eqref{e:mcondition} and 
 \eqref{e:oversampling2}
 imply that the solution $u_H$ to
 \eqref{e:discreteproblem} with parameters $H$, $h$, and $m$ 
 and the solution $u_h$ of the standard Galerkin FEM on the mesh
 $\G_h$ satisfy
\begin{equation*}
\|u_h - u_H\|_V
\lesssim
  \|(1-I_H) u_h \|_V
\approx 
  \min_{v_H\in V_H} \| u_h - v_H \|_V .
\end{equation*}
\end{theorem}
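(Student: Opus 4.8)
The plan is to compare the discrete solution $u_H$ with the quasi-interpolant $I_H u_h$ of the fine-scale Galerkin solution $u_h$ and to show that their difference is of order $\beta^m$ relative to the best-approximation error, which is rendered negligible by the oversampling condition \eqref{e:oversampling2}. The equivalence $\|(1-I_H)u_h\|_V\approx\min_{v_H\in V_H}\|u_h-v_H\|_V$ is routine: since $I_H u_h\in V_H$ it gives the upper bound, and since $I_H$ is a projection, $(1-I_H)u_h=(1-I_H)(u_h-v_H)$ for every $v_H\in V_H$, so stability \eqref{e:IHapproxstabV} yields $\|(1-I_H)u_h\|_V\lesssim\|u_h-v_H\|_V$; minimizing over $v_H$ gives the lower bound. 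By the triangle inequality it then suffices to bound $\|u_H-I_H u_h\|_V$, and I would denote this difference by $e_H\in V_H$.

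First I would observe that $\widetilde V_H\subseteq V_h$, so that both $u_H$ (by \eqref{e:discreteproblem}) and the standard $\G_h$-Galerkin solution $u_h$ (well posed under \eqref{e:finehstability}) test against the same right-hand side over $\widetilde V_H$; hence $a(u_H-u_h,\tilde v_H)=0$ for all $\tilde v_H\in\widetilde V_H$ and therefore
\[
a(e_H,\tilde v_H)=a\big((1-I_H)u_h,\tilde v_H\big)\qquad\text{for all }\tilde v_H\in\widetilde V_H ,
\]
with $(1-I_H)u_h\in W_h$. Next I would invoke the discrete inf-sup stability of Theorem~\ref{t:wellposed} (available under \eqref{e:resolution}, \eqref{e:finehstability}, \eqref{e:mcondition}) to choose $\tilde v_H\in\widetilde V_H$ with $\|\tilde v_H\|_V=1$ and $\Re a(e_H,\tilde v_H)\geq(C_{\mathrm{PG}}\gamma(\kappa,\Omega))^{-1}\|e_H\|_V$. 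Since $\widetilde V_H=(1-\Cor_m)V_H$ and $I_H$ fixes $V_H$ while annihilating the space $W_h$, which contains the range of $\Cor_m$, every such $\tilde v_H$ can be written $\tilde v_H=(1-\Cor_m)w_H$ with $w_H:=I_H\tilde v_H$, and then $\|w_H\|_V\leq C_{I_H,V}$ by \eqref{e:IHapproxstabV}.

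The heart of the argument is the Galerkin-type orthogonality $a(w,(1-\Cor_\infty)v_H)=0$ for all $w\in W_h$, $v_H\in V_H$: summing the idealized element corrector identity \eqref{e:idealElementCorrProb} over $T\in\G_H$ and using that the localized forms $a_T$ add up to $a$ (the stiffness, mass and impedance parts all partition over $\G_H$) gives $a(w,\Cor_\infty v_H)=\sum_{T\in\G_H}a_T(w,v_H)=a(w,v_H)$. Writing $\tilde v_H=(1-\Cor_\infty)w_H+(\Cor_\infty-\Cor_m)w_H$, the first term pairs to zero with $(1-I_H)u_h\in W_h$, so
\[
a(e_H,\tilde v_H)=a\big((1-I_H)u_h,(\Cor_\infty-\Cor_m)w_H\big).
\]
Since $(\Cor_\infty-\Cor_m)w_H\in W_h$, continuity of $a$ and the norm equivalence of Lemma~\ref{l:wellposedideal} bound this by $\sqrt{3/2}\,C_a\|(1-I_H)u_h\|_V\|\nabla(\Cor_\infty-\Cor_m)w_H\|_{L^2(\Omega)}$, while Theorem~\ref{t:CorrCloseness}, estimate \eqref{e:CorrCloseness2}, together with $\|\nabla w_H\|_{L^2(\Omega)}\leq\|w_H\|_V\leq C_{I_H,V}$, gives $\|\nabla(\Cor_\infty-\Cor_m)w_H\|_{L^2(\Omega)}\leq C_2\sqrt{\Colm}\,C_{I_H,V}\,\beta^m$. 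Combining with the inf-sup bound yields
\[
\|e_H\|_V\leq C_{\mathrm{PG}}\,\gamma(\kappa,\Omega)\,\sqrt{3/2}\,C_a\,C_2\sqrt{\Colm}\,C_{I_H,V}\,\beta^m\,\|(1-I_H)u_h\|_V ,
\]
and \eqref{e:oversampling2} forces the prefactor to be $\lesssim 1$, so $\|u_h-u_H\|_V\leq\|(1-I_H)u_h\|_V+\|e_H\|_V\lesssim\|(1-I_H)u_h\|_V$.

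Given Theorems~\ref{t:CorrCloseness} and \ref{t:wellposed}, no single step here is genuinely hard; the points that require care are the bookkeeping that each test function equals $(1-\Cor_m)I_H\tilde v_H$ with $\|I_H\tilde v_H\|_V$ under control, and the verification of the orthogonality $a(W_h,(1-\Cor_\infty)V_H)=\{0\}$ directly from the local corrector problems. (The mismatch between the $C_a^2$ in \eqref{e:oversampling2} and the single power of $C_a$ above is immaterial, since $C_a\geq 1$ and \eqref{e:oversampling2} then merely over-specifies $m$.)
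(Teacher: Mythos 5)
Your proof is correct, and it reaches the key bound for $\|u_H-I_Hu_h\|_V$ by a mechanism that differs from the paper's. The paper sets $e:=u_h-u_H$, splits off $\|(1-I_H)u_h\|_V$ by the triangle inequality exactly as you do, and then estimates $\|I_He\|_V$ (which is the same quantity as your $\|e_H\|_V$, since $I_He=I_Hu_h-u_H$) through an explicit duality argument: it introduces $z_H\in V_H$ solving an adjoint problem whose test functions are the idealized ones $(1-\Cor_\infty)z_H$, tests with $v_H=I_He$ to obtain $\|I_He\|_V^2=a(I_He,(1-\Cor_\infty)z_H)$, splits $(1-\Cor_\infty)z_H=(\Cor_m-\Cor_\infty)z_H+(1-\Cor_m)z_H$, absorbs the first term using \eqref{e:oversampling2}, and treats the second with the Galerkin orthogonality together with the adjoint stability bound $\|\nabla z_H\|_{L^2(\Omega)}\lesssim\gamma(\kappa,\Omega)\|I_He\|_V$. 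You instead apply the discrete inf-sup condition of Theorem~\ref{t:wellposed} directly to $e_H=u_H-I_Hu_h$, use $a(e_H,\tilde v_H)=a((1-I_H)u_h,\tilde v_H)$ from Galerkin orthogonality, and annihilate the leading term with the identity $a(w,(1-\Cor_\infty)v_H)=0$ for $w\in W_h$, $v_H\in V_H$ --- which is exactly the observation the paper records in Remark~\ref{r:IHuhformula} --- leaving a single $\beta^m$ remainder. The ingredients are identical (Theorem~\ref{t:CorrCloseness}, Lemma~\ref{l:wellposedideal}, the projection property of $I_H$, and the fact that $\sum_{T\in\G_H}a_T=a$ including the impedance term), but your version avoids introducing the dual solution and its stability estimate and needs no absorption step, so it is if anything shorter; the only price is a slightly different constant in front of $\beta^m$ (one power of $C_a$ and an extra $C_{I_H,V}$), which, as you correctly note, \eqref{e:oversampling2} still controls up to the generic constant hidden in $\lesssim$. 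The small verifications you flag --- $\widetilde V_H\subseteq V_h$, the representation $\tilde v_H=(1-\Cor_m)I_H\tilde v_H$ with $I_H\circ\Cor_m=0$ and $\|I_H\tilde v_H\|_V\leq C_{I_H,V}$, and the membership of $(1-I_H)u_h$ and $(\Cor_\infty-\Cor_m)w_H$ in $W_h$ --- all check out, as does your argument for the equivalence $\|(1-I_H)u_h\|_V\approx\min_{v_H\in V_H}\|u_h-v_H\|_V$, which the paper asserts without proof.
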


\begin{proof}
Let $e:=u_h-u_H$.
The triangle inequality and Lemma~\ref{l:wellposedideal} yield
\begin{equation*}
\|e\|_V\leq \|(1-I_H) u_h \|_V + \|I_H e\|_V .
\end{equation*}
It remains to bound the second term on the right-hand side.
The proof employs a standard duality argument, the stability of
the idealized method and the fact that our practical method is a
perturbation of that ideal method.
Let $z_H \in V_H$ be the solution to the dual problem
\begin{equation*}
(\nabla v_H, \nabla I_H e) + \kappa^2(v_H,I_H e) = a(v_H,(1-\Cor_\infty) z_H)
\end{equation*}
for all $v_H\in V_H$ (cf.\ Remark~\ref{r:adjoint}).
The choice of the test function $v_H= I_H e$ implies that
\begin{equation*}
\|I_H e\|_V^2
=
a(I_H e,(1-\Cor_\infty) z_H)
=
a(I_H e,(\Cor_m-\Cor_\infty) z_H) + a(I_H e,(1-\Cor_m) z_H).
\end{equation*}
The identity $I_H(\Cor_m-\Cor_\infty) z_H = 0$,
the resolution condition \eqref{e:resolution},
the estimate \eqref{e:CorrCloseness2},
and the stability of the adjoint problem imply
for the first term on the right-hand side that
\begin{equation*}
\begin{aligned}
&
a(I_H e,(\Cor_m-\Cor_\infty) z_H)
\\
&
\leq
C_a \sqrt{3/2}
\|I_H e\|_V \|\nabla(\Cor_m-\Cor_\infty) z_H\|_{L^2(\Omega)}
\\
&
\leq
C_2 \sqrt{\Colm} C_a \sqrt{3/2}
\|I_H e\|_V \beta^m \|\nabla z_H\|
\\
&
\leq
C_2 \sqrt{\Colm} C_a^2 C_{\mathrm{PG}}\gamma(\kappa,\Omega) \sqrt{3/2}
\beta^m \|I_H e\|_V^2.
\end{aligned}
\end{equation*}
The condition \eqref{e:oversampling2} implies that this is
$\leq \frac12 \|I_H e\|_V^2$.
The Galerkin orthogonality $a(u_h - u_H, (1-\Cor_m) z_H) =0$,
the solution property \eqref{e:idealCorrector} of $\Cor_\infty z_H$,
the resolution condition \eqref{e:resolution}
and the exponential decay \eqref{e:CorrCloseness2}
imply for the second term
\begin{equation*}
\begin{aligned}
a(I_H e,(1-\Cor_m) z_H) 
&= a(I_H u_h - u_h,(1-\Cor_m) z_H)
\\
&= a(I_H u_h - u_h,(\Cor_\infty-\Cor_m) z_H)
\\
&\leq \sqrt{3/2}C_a C_2 \sqrt{\Colm} \beta^m \|I_H u_h - u_h\|_V \|\nabla z_H\|_{L^2(\Omega)}
.
\end{aligned}
\end{equation*}
The stability of the adjoint problem implies
\begin{equation*}
\|\nabla z_H\|_{L^2(\Omega)} \leq C_{\mathrm{PG}} \gamma(\kappa,\Omega)C_a \|I_H e\|_V.
\end{equation*}
Thus,
\begin{equation*}
a(I_H e,(1-\Cor_m) z_H)
\leq 
\sqrt{3/2}C_a^2 C_2 \sqrt{\Colm} C_{\mathrm{PG}} \beta^m \gamma(\kappa,\Omega)
\|I_H u_h - u_h\|_V  \|I_H e\|_V
.
\end{equation*}
The term $\|I_H e\|_V$ can be absorbed and the oversampling condition
\eqref{e:mcondition} implies that $\beta^m \sqrt{\Colm} \gamma(\kappa,\Omega)$
is controlled by some $\kappa$-independent constant.
The combination with the 
foregoing displayed formulae concludes the proof.
\end{proof}

The following consequence of Theorem~\ref{t:quasiopt} states an
estimate for the error $u-u_H$.

\begin{corollary}
Under the conditions of Theorem~\ref{t:quasiopt}, the discrete
solution $u_H$ to \eqref{e:discreteproblem} satisfies with some
constant $C\approx 1$ that
\begin{equation*}
\| u- u_H \|_V
\leq
  \| u- u_h \|_V
  +
  C \min_{v_H\in V_H} \| u_h - v_H \|_V .
\end{equation*}
In particular, provided that the solution satisfies
$u\in W^{1,s}(\Omega)$ for $0<s\leq 1$, the error decays as
$\| u- u_H \|_V\leq\mathcal{O}(H^s)$.\qed
\end{corollary}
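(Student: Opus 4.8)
The plan is to obtain both assertions from Theorem~\ref{t:quasiopt} by two elementary applications of the triangle inequality. First I would split
\[
\|u-u_H\|_V\leq\|u-u_h\|_V+\|u_h-u_H\|_V,
\]
where $u_h$ denotes the standard Galerkin approximation on the fine mesh $\G_h$ as in Theorem~\ref{t:quasiopt}. That theorem bounds the second summand by $\|u_h-u_H\|_V\lesssim\min_{v_H\in V_H}\|u_h-v_H\|_V$, and collecting the hidden constant into a single $C\approx1$ (this is legitimate because, under the resolution and oversampling conditions assumed, the constant in Theorem~\ref{t:quasiopt} is $\kappa$-independent) yields the first displayed inequality of the corollary. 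This part is immediate.

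For the asymptotic rate I would estimate the coarse best-approximation error by inserting the quasi-interpolant $I_Hu\in V_H$ as a competitor,
\[
\min_{v_H\in V_H}\|u_h-v_H\|_V\leq\|u_h-I_Hu\|_V\leq\|u_h-u\|_V+\|u-I_Hu\|_V.
\]
The term $\|u-u_h\|_V$ is the fine-scale discretization error; since \eqref{e:finehstability} makes the $\G_h$-FEM inf-sup stable, hence quasi-optimal, this error is $\mathcal{O}(h^s)\leq\mathcal{O}(H^s)$ for fixed $\kappa$ once $u$ has the assumed Sobolev regularity and $\G_h$ refines $\G_H$. For $\|u-I_Hu\|_V$ I would use the approximation properties of $I_H$: applying \eqref{e:IHapproxstab} to $u-v_H$ with an arbitrary $v_H\in V_H$ and exploiting the projection property $I_Hv_H=v_H$ localizes the error, and combining this with the elementwise polynomial (Bramble--Hilbert type) approximation estimates gives both $H^{-1}\|u-I_Hu\|_{L^2}\lesssim H^{s}$ and $\|\nabla(u-I_Hu)\|_{L^2}\lesssim H^{s}$; squaring, adding the weights, and absorbing the factor $\kappa$ in front of the $L^2$-part by $\kappa H\lesssim1$ yields $\|u-I_Hu\|_V\lesssim H^{s}$ with a $\kappa$-independent constant. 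Substituting the two bounds back into the chain of inequalities proves $\|u-u_H\|_V\leq\mathcal{O}(H^s)$.

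The routine ingredients are the two triangle inequalities, the invocation of Theorem~\ref{t:quasiopt}, and the quasi-optimality of the fine-scale FEM. The one point that I expect to require a little care — and which I would regard as the main (and only minor) obstacle — is upgrading the abstract \emph{stability} bound \eqref{e:IHapproxstab} for $I_H$ into a genuine $\mathcal{O}(H^s)$ \emph{approximation} estimate in the weighted norm $\|\cdot\|_V$: one must split the $L^2$- and gradient contributions, use the reproduction of $V_H$ together with the locality of \eqref{e:IHapproxstab}, invoke standard interpolation estimates on the patches $\nei(T)$, and only then absorb the $\kappa$-weight via $\kappa H\lesssim1$. No ideas beyond those already developed in the paper are needed.
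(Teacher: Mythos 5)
Your argument is correct and is exactly the one the paper intends: the corollary carries a \qed{} because it is regarded as immediate from Theorem~\ref{t:quasiopt} via the triangle inequality $\|u-u_H\|_V\leq\|u-u_h\|_V+\|u_h-u_H\|_V$, and the $\mathcal{O}(H^s)$ rate follows from standard approximation properties of $V_H$ exactly as you describe. The only point worth noting is that the regularity hypothesis as printed ($u\in W^{1,s}(\Omega)$) should be read as $H^{1+s}$-type regularity for your Bramble--Hilbert step to apply, which is clearly what is meant.
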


\begin{remark}\label{r:IHuhformula}
In the idealized case that $m=\infty$, we have 
$u_h - I_H u_h \in W_h$ and, thus,
\begin{equation*}
a(u_h - I_H u_h, (1-\Cor_\infty) v_H) = 0
\quad\text{for all } v_H\in V_H.
\end{equation*}
Therefore, problem \eqref{e:discreteproblem} and the Galerkin
property show that $u_H = I_H u_h$.
\end{remark}

\section{Numerical Experiments}\label{s:num}

We investigate the method in three numerical experiments.
The convergence history plots display the absolute error in the norm
$\|\cdot\|_V$ versus the mesh size $H$.

\subsection{Plane Wave on the Square Domain}\label{ss:2dPlaneWave}
On the unit square $\Omega=(0,1)^2$, we consider the pure Robin problem 
$\Gamma_R=\partial\Omega$ 
with data given by the plane wave
$u(x)=\exp(-i\kappa x\cdot
 \left(\begin{smallmatrix}0.6\\0.8\end{smallmatrix}\right))$.

\begin{figure}
\begin{center}
\subfloat[\label{sf:PlaneWave2Dkappadependence64}]{
\includegraphics[width=.49\textwidth]{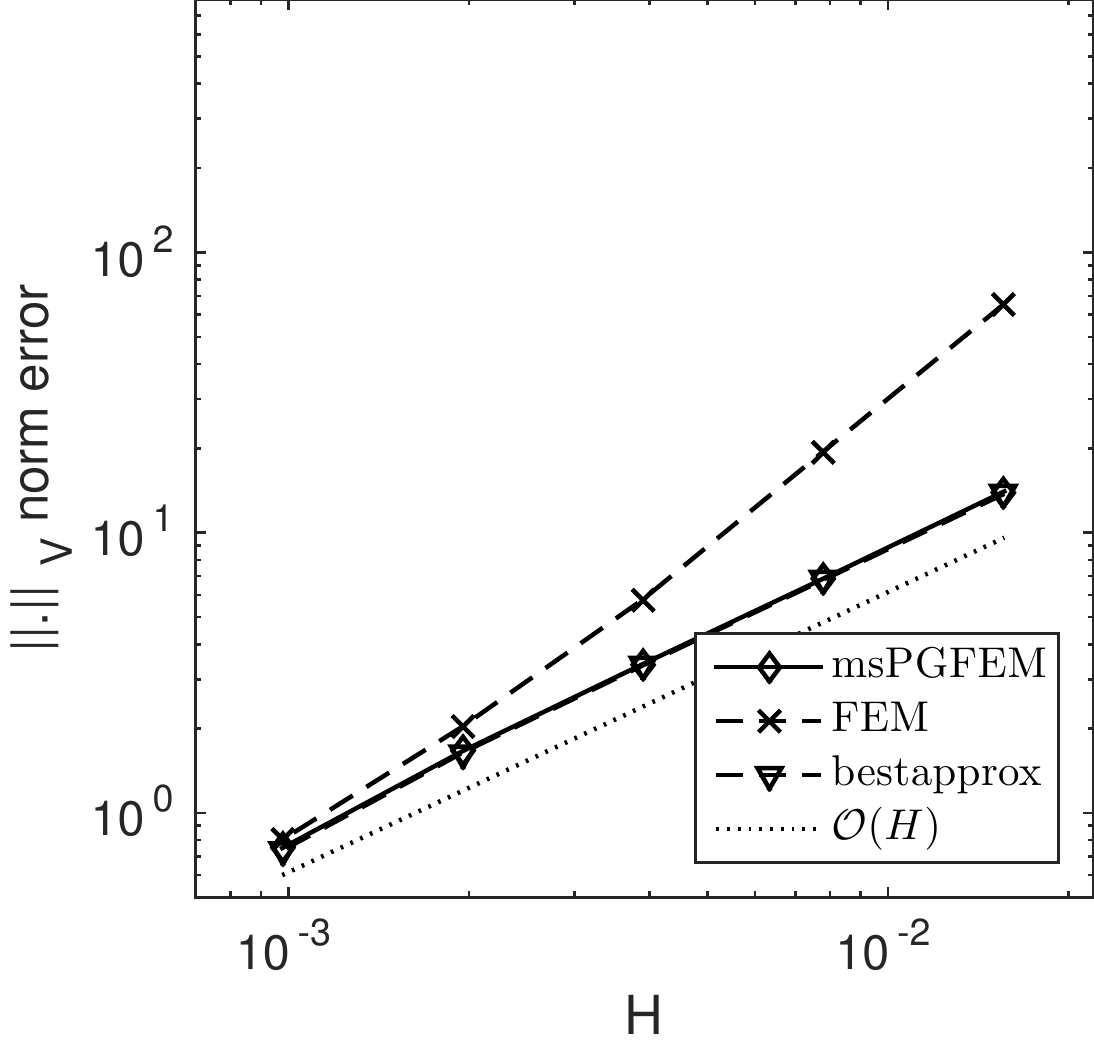}
}
\subfloat[\label{sf:PlaneWave2Dkappadependence128}]{
\includegraphics[width=.49\textwidth]{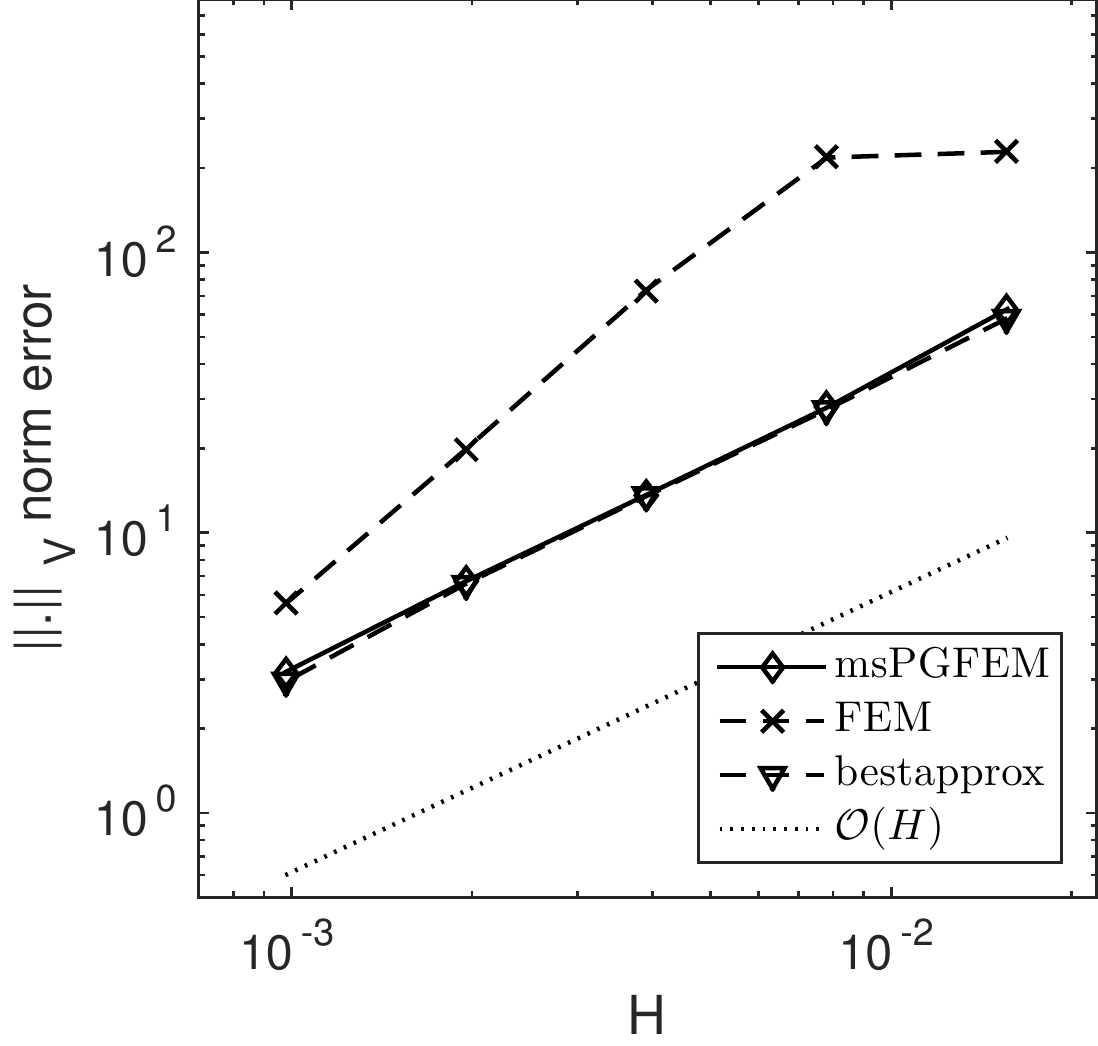}
}
\\
\subfloat[\label{sf:PlaneWave2Dkappadependence256}]{
\includegraphics[width=.49\textwidth]{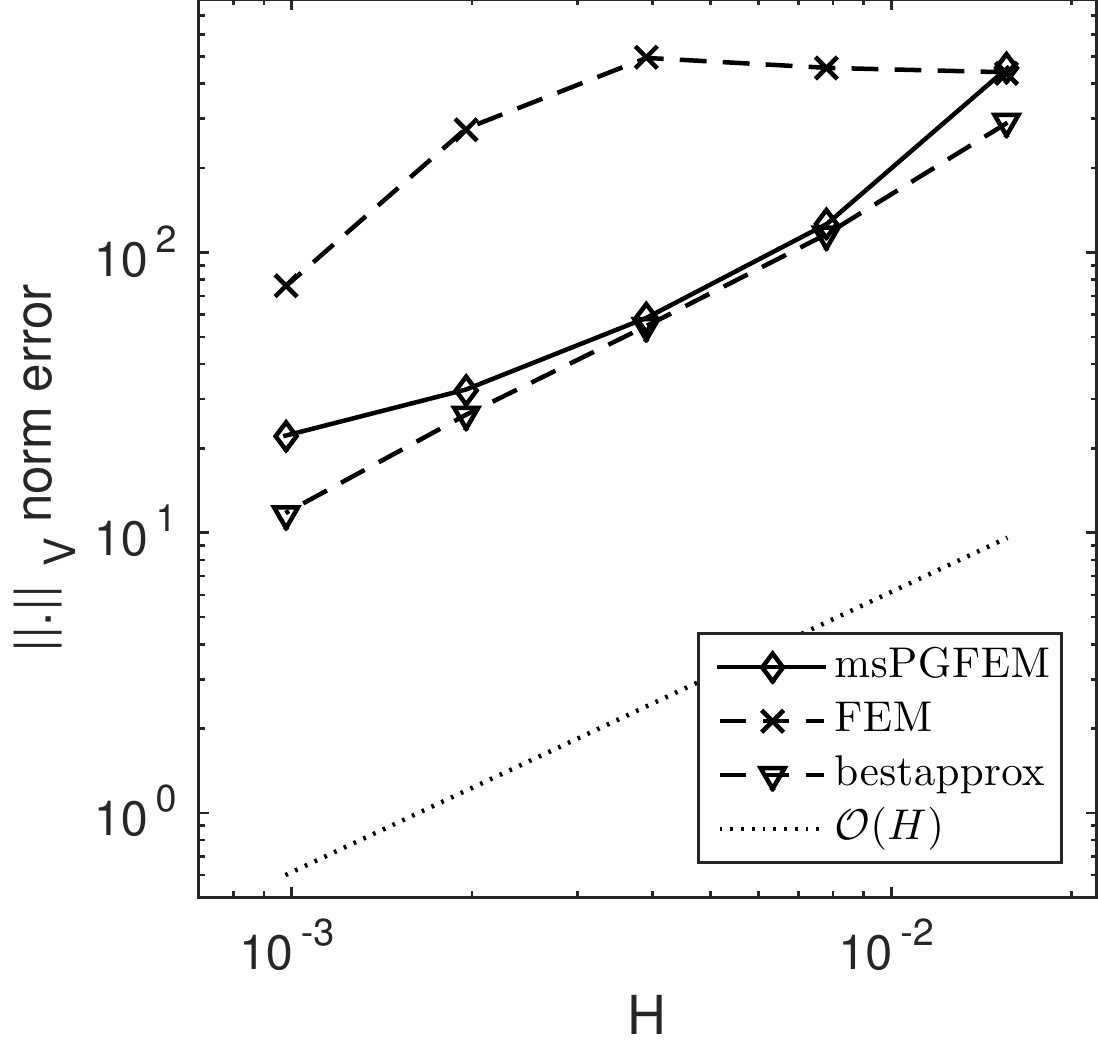}
}
\subfloat[\label{sf:PlaneWave2Dhdependence}]{
\includegraphics[width=.49\textwidth]{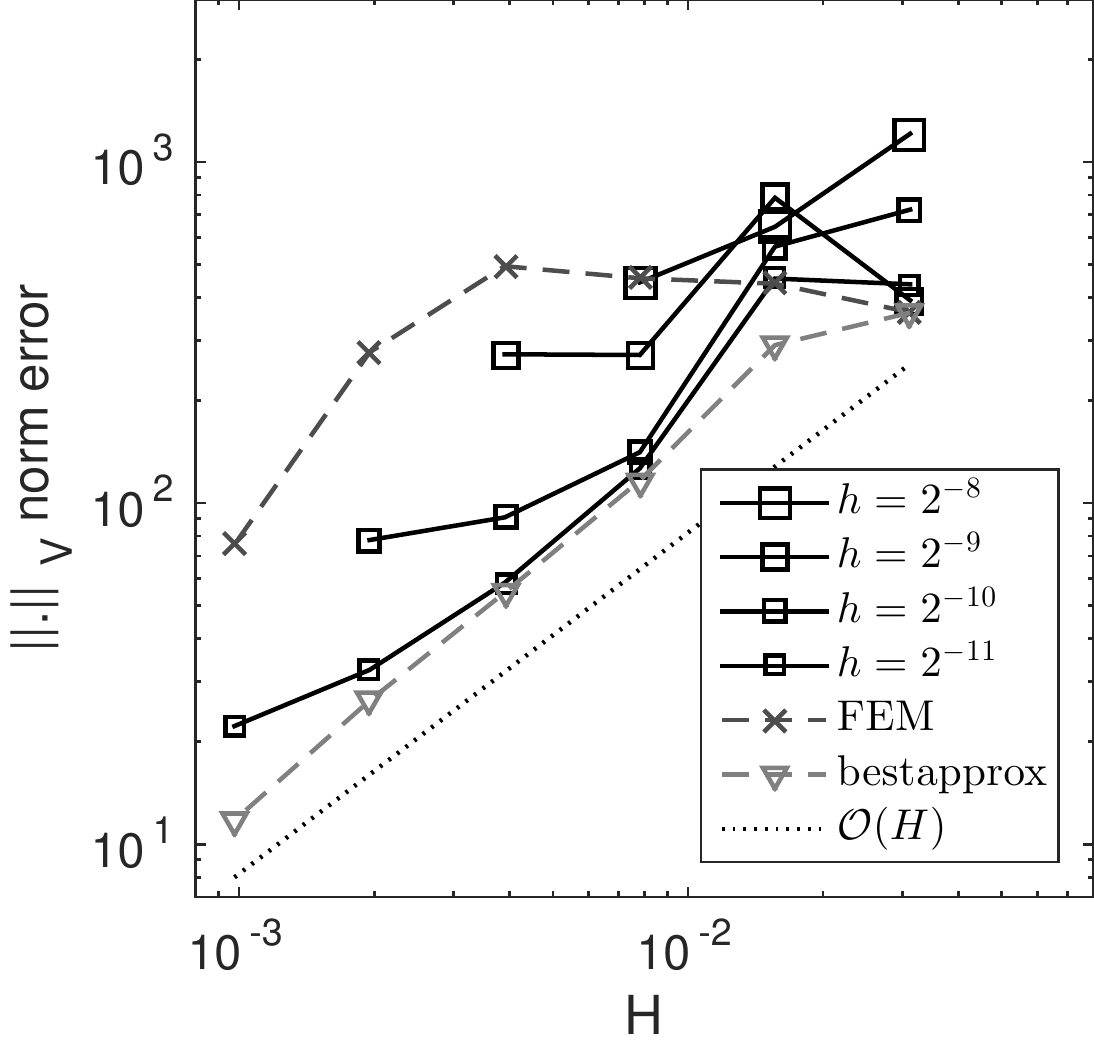}
}
\caption{(a)--(c). Comparison of the msPGFEM with the  best approximation in
         $\|\cdot\|_V$ and the standard Galerkin FEM
         for the 2D plane wave example for $\kappa=2^6,2^7,2^8$.
         (d). Dependence on the fine mesh parameter $h$
         in the 2D plane wave example with $\kappa=2^8$.
         }   
\end{center}
\end{figure}
 
Figure~\ref{sf:PlaneWave2Dkappadependence64}--\ref{sf:PlaneWave2Dkappadependence256}
displays the convergence
history for $\kappa=2^6,2^7,2^8$ and the fine-scale mesh parameter
$h=2^{-11}$.
The best-approximation error of continuous $Q_1$ functions
in $\|\cdot\|_V$ and the error of
the standard Galerkin FEM on the same coarse mesh are plotted
for comparison.
As expected, the standard FEM clearly exhibits the pollution effect,
and larger values of $\kappa$ increase the discrepancy between the
approximation error of the FEM and the theoretical best-approximation
by $Q_1$ functions in the regime under consideration.
In contrast, the approximation by the msPGFEM  can compete with the 
best-approximation on meshes that allow a meaningful representation of
the solution.
We stress the fact that the convergence history plots merely
take into account the coarse mesh-size $H$, but the computational cost
in the multiscale method is moderately higher than in the standard FEM
due to the increased communication caused by the coupling
$m\approx\log(\kappa)$.

For the oversampling parameter $m=2$, the number of corrector problems
to be solved 
for the finest mesh $\G_H$
is 49 out of $1\,048\,576$
when no symmetry is exploited.

Figure~\ref{sf:PlaneWave2Dhdependence}  displays
the dependence on the fine mesh parameter $h$ for
$\kappa=2^8$ and oversampling parameter $m=6$.
Since the multiscale method based on the fine grid $\G_h$ computes
approximations of the FEM solution on that fine grid, 
e.g.\ $u_H = I_H u_h$ for $m=\infty$ as in Remark~\ref{r:IHuhformula},
it is clear that
the accuracy of the msPGFEM is limited by the accuracy of the standard
FEM on the fine scale. This can be observed in
Figure~\ref{sf:PlaneWave2Dhdependence}.
It can be also seen that a finer fine-scale mesh-size $h$ improves the
error of the msPGFEM towards the best-approximation.
In this two-dimensional example, 
the quasi-optimality constant appears to be close to 1

Next, we study the dependence on the oversampling parameter $m$.
Figure~\ref{f:PlaneWave2Dconvergence} displays the convergence
history for $\kappa=2^7$ and $\kappa=2^8$.
The fine mesh parameter is $h=2^{-11}$ and $m$ varies from
$m=1$ to $m=6$.
It turns out that for the present configuration, the value $m=2$ is
sufficient for quasi-optimality.
In the 
range
where $H$ is significantly
larger than $\kappa^{-1}$ and the resolution condition is violated,
larger oversampling parameters may lead
to larger errors,
which is not surprising in view of the lack of decay, see also
Figure~\ref{f:testfct2Ddensity}.
This, however, is no more the case as soon as
$H$ is small enough to allow for a meaningful representation of the 
wave.

\begin{figure}
\begin{center}
\includegraphics[width=.49\textwidth]{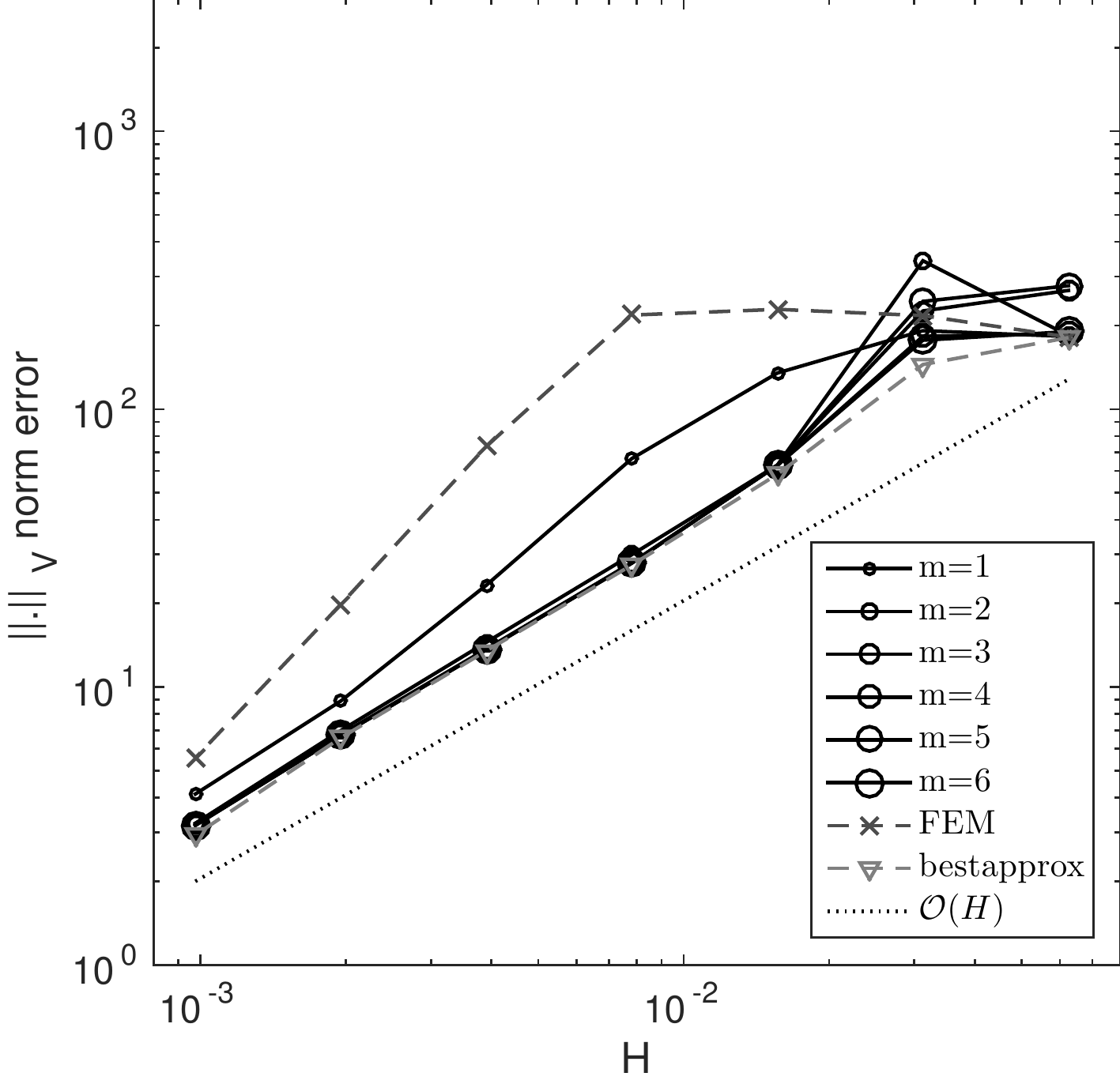}
\includegraphics[width=.49\textwidth]{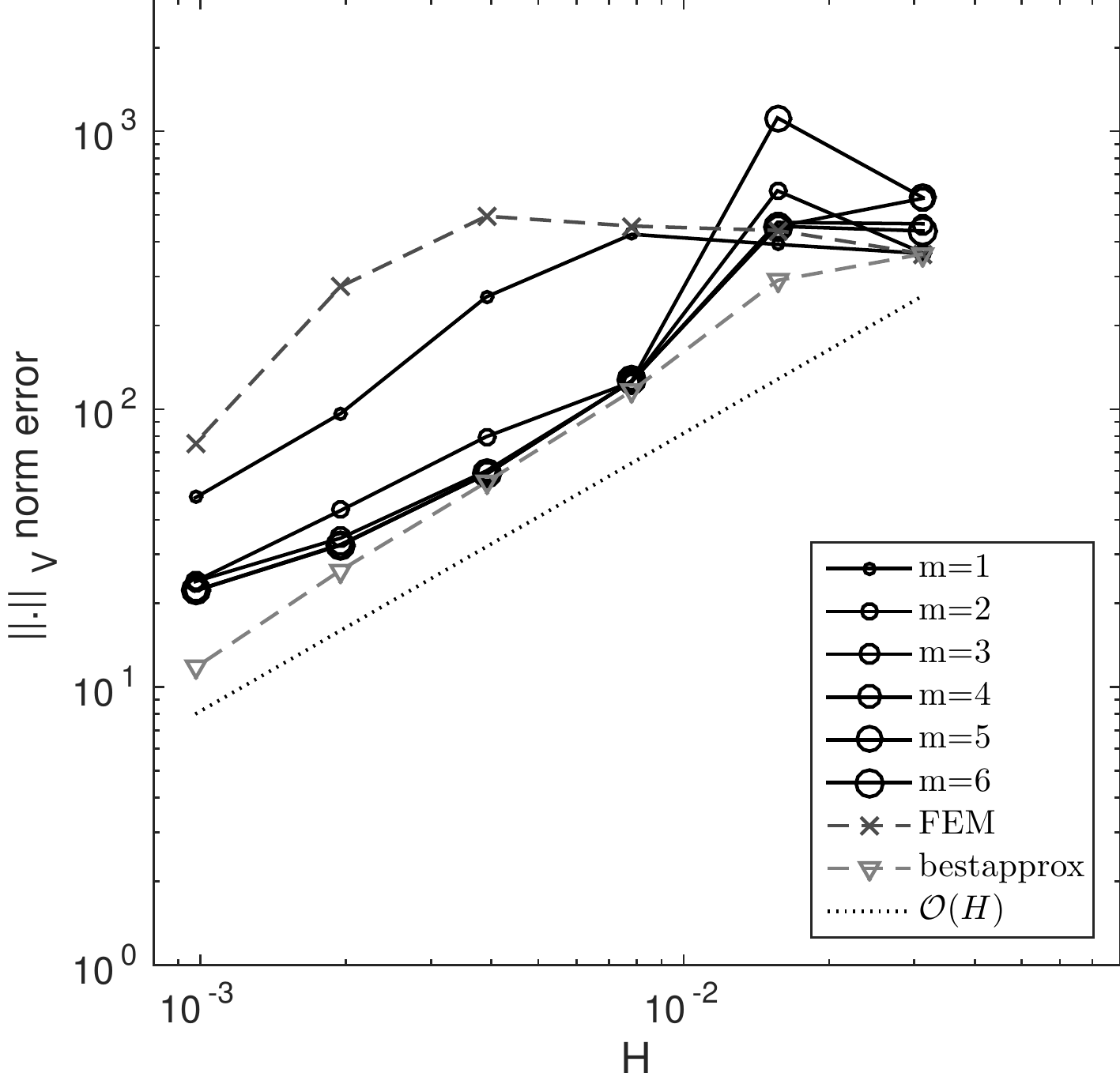}
\caption{Convergence history for the 2D plane wave example
         with $\kappa=2^7$ (left) and $\kappa=2^8$(right),
         $h=2^{-11}$ and varying $m$.
         \label{f:PlaneWave2Dconvergence}}
\end{center}
\end{figure}

\subsection{Multiple Sound-Soft Scatterers in 2D}\label{ss:multScatterer}
\begin{figure}
\begin{center}
\begin{tikzpicture}[scale=0.2]
\draw[step=1.0,black,thin] (0,0) grid (16,16);
\fill[black!90!white] (5,5) rectangle (7,7);
\draw[black!40!white,ultra thick] (5,5) rectangle (7,7);
\fill[black!90!white] (10,8) rectangle (12,10);
\draw[black!40!white,ultra thick] (10,8) rectangle (12,10);
\fill[black!90!white] (4,10) rectangle (6,13);
\draw[black!40!white,ultra thick] (4,10) rectangle  (6,13);
\end{tikzpicture}
\end{center}
\caption{Coarse mesh for the square domain with three scatterers
         from Subsection~\ref{ss:multScatterer}
         \label{f:scatteringdomain}
         }
\end{figure}
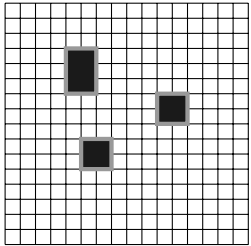

We consider the domain
\begin{equation*}
 \Omega := (0,1)^2 \big\backslash 
        \big( 
        \textstyle
          \left[\frac{5}{16},\frac{7}{16}\right]
            \times \left[\frac{5}{16},\frac{7}{16}\right]
           \cup
           \left[\frac{10}{16},\frac{12}{16}\right]
             \times \left[\frac{8}{16},\frac{10}{16}\right]
           \cup
           \left[\frac{4}{16},\frac{6}{16}\right]
             \times \left[\frac{10}{16},\frac{13}{16}\right]
        \big)
\end{equation*}
from Figure~\ref{f:scatteringdomain}.
The incident wave 
$u_{\mathrm{in}}(x)=\exp(-i\kappa x\cdot
 \left(\begin{smallmatrix}0.6\\0.8\end{smallmatrix}\right))$
is incorporated through the Robin boundary condition with
$g:= i\kappa u_{\mathrm{in}} + \partial_\nu u_{\mathrm{in}}$
on the outer boundary $\Gamma_R:=\{x\in \{0,1\} \text{ or } y\in\{0,1\}\}$.
On the remaining part of the
boundary $\Gamma_D:=\partial\Omega\setminus\Gamma_R$
we impose homogeneous Dirichlet conditions.
We choose the fine mesh parameter as $h=2^{-11}$.
Since the exact solution is unknown, we compute a reference solution
with the standard $Q_1$ FEM on the fine mesh $\G_h$
and we compare the coarse approximation with this reference solution.
Errors committed by the fine scale are not included in the discussion.
Figure~\ref{f:scatteringconv}
displays the convergence history for $\kappa=2^5$ 
and $\kappa=2^6$.
The oversampling parameter $m$ varies from $m=1$ to $m=4$.
As in the foregoing example, the value $m=2$ for the oversampling parameter
seems to be sufficient for the quasi-optimality and even a quasi-optimality
constant close to 1
in the range of wave numbers considered here.
In particular, the pollution effect that is visible
for the standard Galerkin FEM is not present for the msPGFEM.
Reduced convergences rates which are expected from the presence
of re-entrant corners are not visible in this computational range.
For the oversampling parameter $m=2$, the number of corrector problems
to be solved 
for the finest mesh $\G_H$
is 210 out of $61\,952$
when no symmetry is exploited.

\begin{figure}
\begin{center}
\includegraphics[width=.49\textwidth]{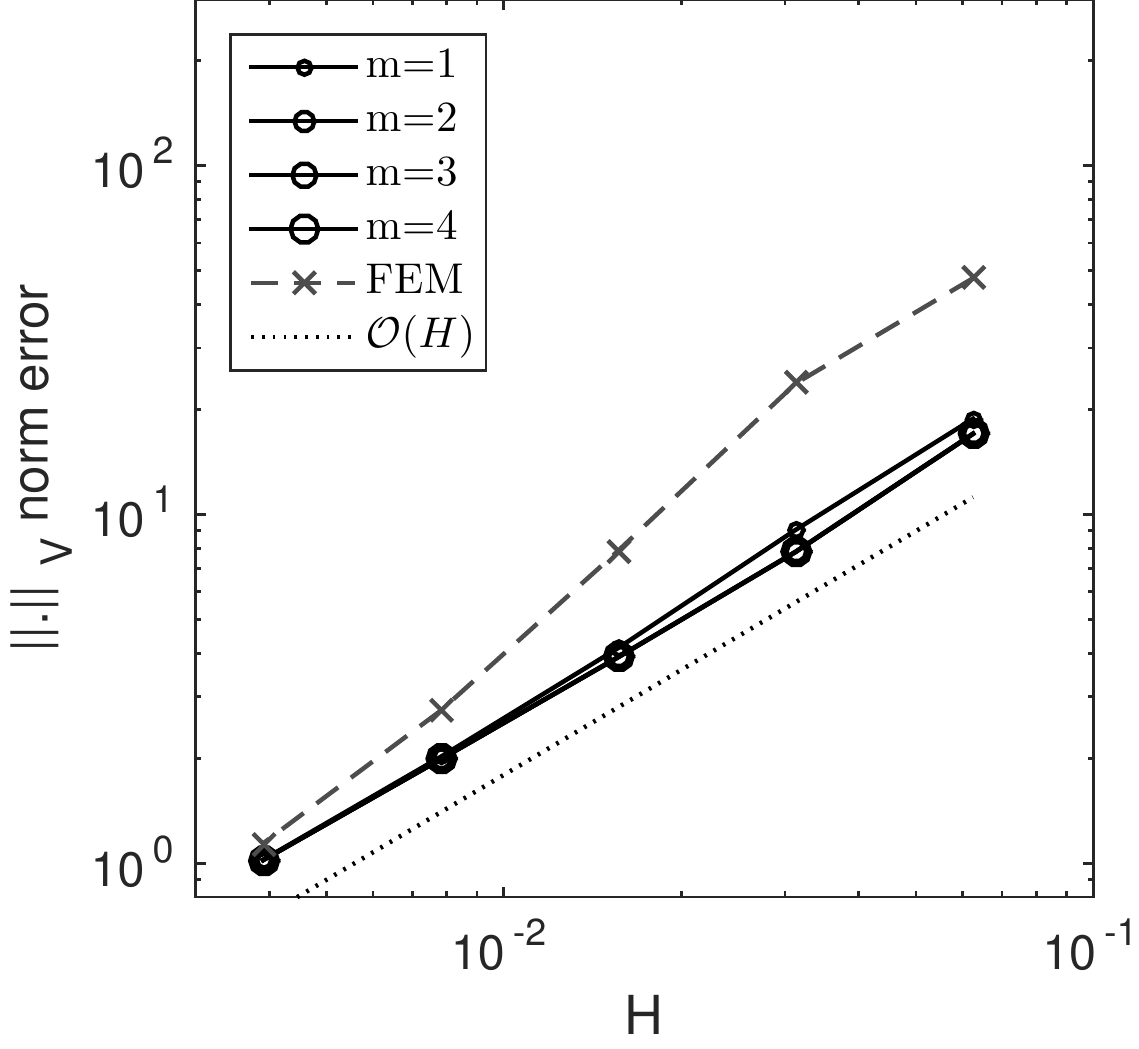}
\includegraphics[width=.49\textwidth]{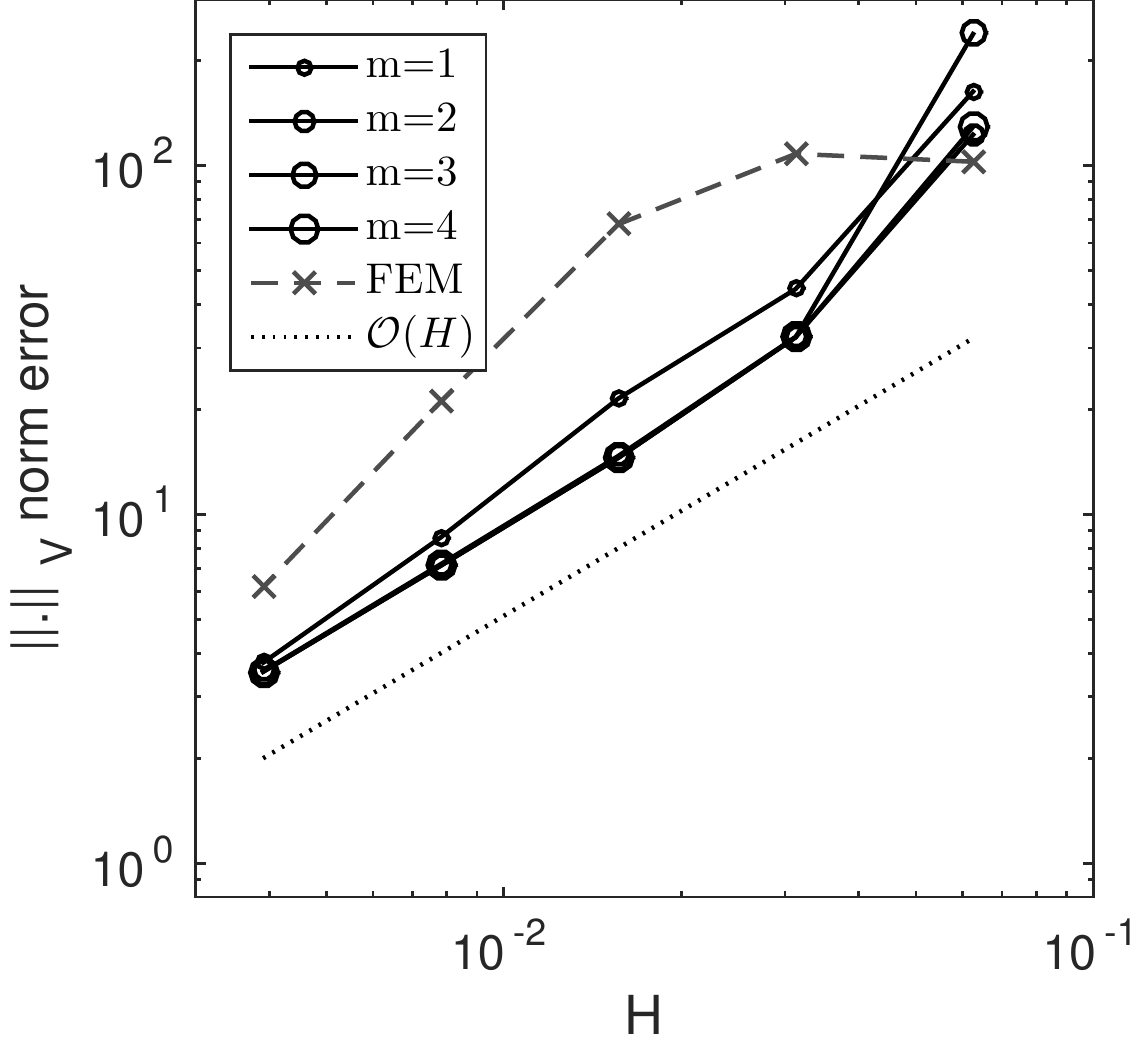}
\end{center}
\caption{Convergence history for the multiple scattering example
         from Subsection~\ref{ss:multScatterer} for 
         $\kappa=2^5$ (left) and $\kappa=2^6$ (right) $h=2^{-11}$.
         \label{f:scatteringconv}
         }
\end{figure}

\subsection{Plane Wave on the Cube Domain}
On the unit cube $\Omega=(0,1)^3$, we consider the pure Robin
problem with data given by the plane wave
$u(x)=\exp(-i\kappa x\cdot \frac{1}{\sqrt{38}}
 \left(\begin{smallmatrix}2\\3\\5\end{smallmatrix}\right))$.

We choose $\kappa=2^5$.
Figure~\ref{f:convergenceCube} compares the error of the 
msPGGEM $h=2^{-4}$ and $m\in\{1,2,3,4\}$ with the best-approximation
in the $\|\cdot\|_V$ norm and the error of the standard Galerkin
FEM.
Also in this example, the msPGFEM is pollution-free for the
oversampling parameter $m \geq 2$. The quasi-optimality constant appears
slightly larger than in 2D.
For the oversampling parameter $m=2$, the number of corrector problems
to be solved
for the finest mesh $\G_H$
is 343 out of $262\,144$
when no symmetry is exploited.

\begin{figure}
\begin{center}
\includegraphics[width=0.49\textwidth]{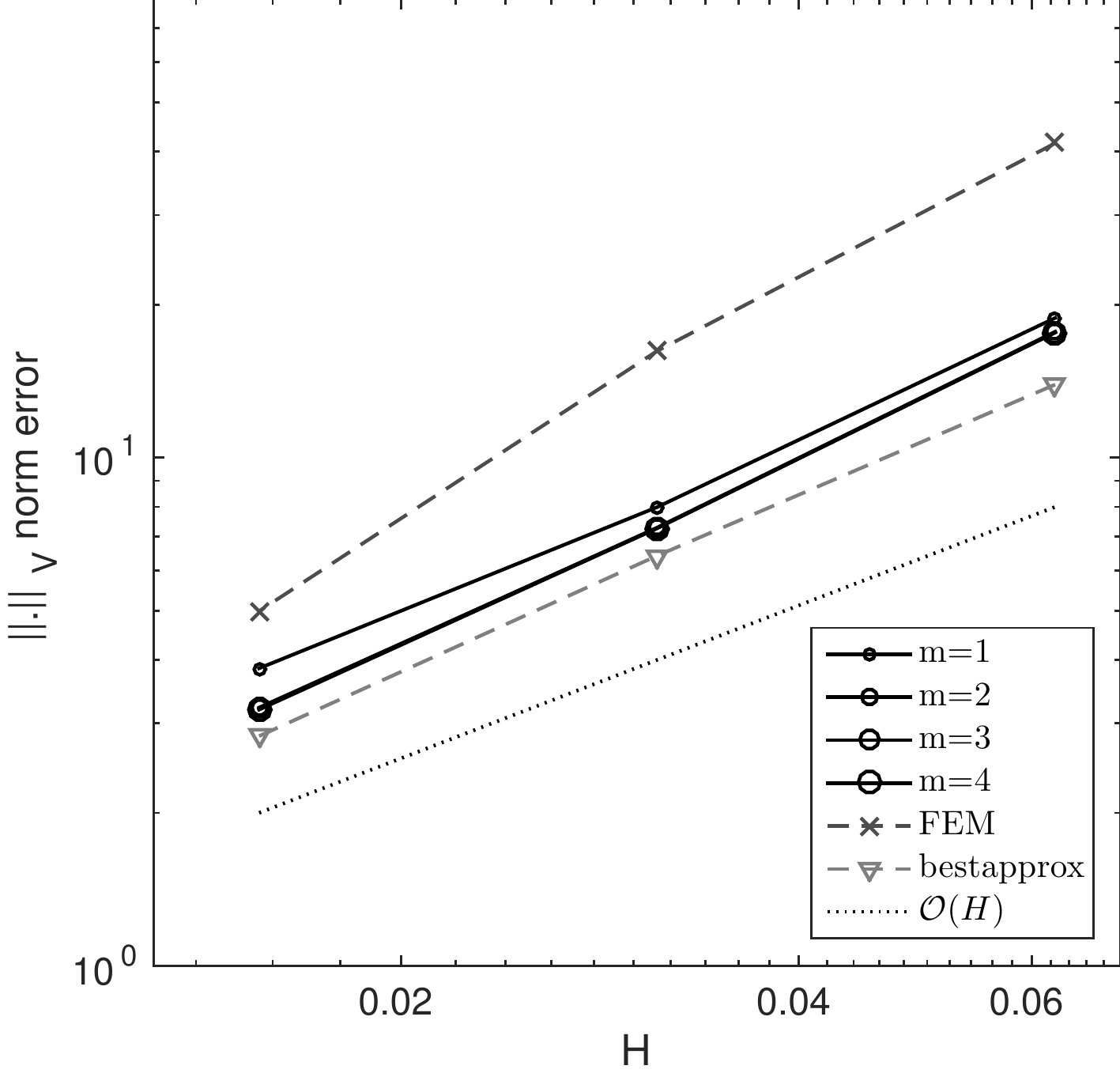}
\end{center}
\caption{Convergence history
         for the 3D plane wave example for $\kappa=2^5$ and $h=2^{-7}$.
         \label{f:convergenceCube}
         }
\end{figure}

\begin{appendix}

\section{Appendix: Proof of Theorem~\ref{t:CorrCloseness}}\label{a:proof}

For the sake of completeness we also present a proof of the exponential
decay result Theorem~\ref{t:decay} which is central for the method.
The idea of the proof is the same as in the previous proofs of the 
exponential decay
\cite{MP14,HP13,EGMP13,HMP14,Brown.Peterseim:2014}
in the context of diffusion problems.
The difference especially with respect to \cite{Peterseim2014} is
that here the quasi-interpolation is a projection.
This simplifies the proofs and leads to slightly better rates
in the exponential decay that have been experimentally observed 
in \cite{Peterseim2014}.

Let $I_h:C^0(\Omega)\to V_h$ denote the nodal $Q_1$ interpolation 
operator. Standard interpolation estimates and the inverse 
inequality prove for any $T\in \G_H$ and all
$q\in Q_2(T)$ the stability estimate
\begin{equation}\label{e:Ihestimate}
\|\nabla I_h q\|_{L^2(T)}
\leq \CIh \|\nabla q\|_{L^2(T)} .
\end{equation}
In the proofs we will frequently make use of cut-off functions.
We collect some properties in the following lemma.
\begin{lemma}
Let $\eta\in\cS^1(\G_H)$ be a function with values in the interval
$[0,1]$ satisfying the bound
\begin{equation}\label{e:etaestimate}
\|\nabla\eta\|_{L^\infty(\Omega)}\leq C_\eta H^{-1}
\end{equation}
and let $\cR:=\supp(\nabla\eta)$.
Given any subset $\mathcal K\subseteq\G_H$, 
any $\phi\in W_h$ satisfies for
$S=\cup\mathcal K\subseteq\overline\Omega$ 
 that
\begin{align}
\label{e:estA}
\| \phi\|_{L^2(S)}
& \lesssim H \|\nabla\phi\|_{L^2(\nei(S))}
\\
\label{e:estB}
\|(1-I_H)I_h(\eta\phi) \|_{L^2(S)}
& \lesssim H \|\nabla(\eta\phi)\|_{L^2(\nei(S))}
\\
\label{e:estC}
\|\nabla(\eta\phi)\|_{L^2(S)}
& \lesssim \|\nabla\phi\|_{L^2(S\cap\{\supp(\eta)\})} 
     + \|\nabla\phi\|_{L^2(\nei(S\cap\cR))} .
\end{align}
\end{lemma}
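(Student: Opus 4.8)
The plan is to establish the three estimates \eqref{e:estA}--\eqref{e:estC} by exploiting the defining properties of the quasi-interpolation operator $I_H$ from \eqref{e:IHapproxstab}, the stability \eqref{e:Ihestimate} of the nodal interpolant $I_h$, and the product rule together with the cutoff bound \eqref{e:etaestimate}. These are local, $\kappa$-independent estimates, so $\kappa$ plays no role here; only mesh constants and the overlap constant $\Col$ enter the hidden constants.

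\begin{proof}
For \eqref{e:estA}: since $\phi\in W_h$, we have $I_H\phi=0$, hence $\phi=(1-I_H)\phi$ on all of $\Omega$. Writing $\|\phi\|_{L^2(S)}^2=\sum_{T\in\mathcal K}\|(1-I_H)\phi\|_{L^2(T)}^2$ and applying \eqref{e:IHapproxstab} elementwise gives $\|(1-I_H)\phi\|_{L^2(T)}\le \CIH H\|\nabla\phi\|_{L^2(\nei(T))}$. Summing the squares and using that each element of $\G_H$ appears in at most $\Col$ of the patches $\nei(T)$, $T\in\mathcal K$, yields $\|\phi\|_{L^2(S)}\lesssim H\|\nabla\phi\|_{L^2(\nei(S))}$ with a constant depending only on $\CIH$ and $\Col$.

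For \eqref{e:estB}: the function $\eta\phi$ is continuous and piecewise a product of a $Q_1$ coarse function and a $Q_1$ fine function, so $\eta\phi$ restricted to any fine element is a polynomial of bounded degree; $I_h(\eta\phi)\in V_h$ is well-defined. On each $T\in\mathcal K$ apply \eqref{e:IHapproxstab} to $v:=I_h(\eta\phi)$ to get $\|(1-I_H)I_h(\eta\phi)\|_{L^2(T)}\le \CIH H\|\nabla I_h(\eta\phi)\|_{L^2(\nei(T))}$, then use \eqref{e:Ihestimate} (on each coarse element of $\nei(T)$, noting $\eta\phi|_K\in Q_2(K)$ since $\eta|_K\in Q_1$ and $\phi|_K$ is piecewise $Q_1$ — here one passes to the fine mesh and uses that $\eta$ is affine on fine elements contained in $K$, so the product is $Q_2$ there) to replace $\nabla I_h(\eta\phi)$ by $\nabla(\eta\phi)$. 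Squaring and summing with the finite-overlap argument as before gives \eqref{e:estB}.

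For \eqref{e:estC}: apply the product rule $\nabla(\eta\phi)=\eta\nabla\phi+\phi\nabla\eta$ on $S$. The first term is bounded pointwise by $|\nabla\phi|$ on $\supp(\eta)$, giving the first contribution $\|\nabla\phi\|_{L^2(S\cap\{\supp(\eta)\})}$. For the second term, $\nabla\eta$ is supported on $\cR$, so $\|\phi\nabla\eta\|_{L^2(S)}\le C_\eta H^{-1}\|\phi\|_{L^2(S\cap\cR)}$; now invoke \eqref{e:estA} with the element collection $\{T\in\G_H: T\subseteq \overline{S}\cap\cR\}$ to absorb the factor $H^{-1}$ against $\|\phi\|_{L^2(S\cap\cR)}\lesssim H\|\nabla\phi\|_{L^2(\nei(S\cap\cR))}$. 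Combining the two contributions gives \eqref{e:estC}.

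The main technical point to handle carefully is the polynomial-degree bookkeeping in \eqref{e:estB}: one must verify that $\eta\phi$ lies in a fixed finite-dimensional polynomial space on each fine element (so that $I_h$ is a bounded projection there via \eqref{e:Ihestimate}) and keep track of the fact that $\nabla\eta$ jumps across coarse element boundaries. Everything else is a routine assembly of elementwise estimates plus the bounded-overlap counting controlled by $\Col$.
\end{proof}
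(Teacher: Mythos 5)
Your proof is correct and follows essentially the same route as the paper: \eqref{e:estA} from $\phi=(1-I_H)\phi$ and \eqref{e:IHapproxstab} with finite overlap, \eqref{e:estB} by chaining \eqref{e:IHapproxstab} with the $I_h$-stability \eqref{e:Ihestimate}, and \eqref{e:estC} by the product rule, the cutoff bound \eqref{e:etaestimate}, and \eqref{e:estA}. Your extra bookkeeping that $\eta\phi$ is $Q_2$ on each fine cell (so that \eqref{e:Ihestimate} applies) is a detail the paper leaves implicit, and it is handled correctly.
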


\begin{proof}
The property \eqref{e:IHapproxstab} readily implies \eqref{e:estA}.
Furthermore, \eqref{e:IHapproxstab} implies
\begin{equation*}
\|(1-I_H)I_h(\eta\phi) \|_{L^2(S)}
\leq H\CIH\sqrt{\Col} \|\nabla I_h(\eta\phi)\|_{L^2(\nei(S))}.
\end{equation*}
Estimate \eqref{e:Ihestimate} leads to
\begin{equation*}
\begin{aligned}
\|\nabla I_h(\eta\phi)\|_{L^2(\nei(S))}
\leq \CIh \|\nabla (\eta\phi)\|_{L^2(\nei(S))}.
\end{aligned}
\end{equation*}
This proves \eqref{e:estB}.
For the proof of \eqref{e:estC} the product rule and 
\eqref{e:etaestimate} imply
\begin{equation*}
\|\nabla(\eta\phi)\|_{L^2(S)} 
\leq
\|\nabla\phi\|_{L^2(S\cap\{\supp(\eta)\})}
+C_\eta H^{-1} \|\phi\|_{L^2(S\cap\cR)}.
\end{equation*}
The combination with \eqref{e:estA} concludes the proof.
\end{proof}

%%%%%%%%%%%%%%%%%%%%%%%%%%%%%%%%%%%%%%%%%%%%%%%%%%%%%%%%%%%%%%%%%%%%%%
\begin{theorem}[decay]\label{t:decay}
Under the resolution condition \eqref{e:resolution},
there exists $0<\beta<1$ such that,
for any $v_H\in V_H$ and all $T\in\G_H$ and $m\in\mathbb N$,
\begin{equation*}
\|\nabla \Cor_{T,\infty} v_H \|_{L^2(\Omega\setminus\nei^{m}(T))}
\leq 
C \beta^m \|\nabla v_H\|_{L^2(T)}.
\end{equation*}
\end{theorem}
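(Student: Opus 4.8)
The plan is to prove exponential decay by the now-standard iteration argument: show that, away from $T$, the corrector $\phi := \Cor_{T,\infty} v_H$ decays by a fixed factor every time one moves out by a constant number of coarse layers. Concretely, fix an integer $\ell \geq 1$ (to be chosen) and, for $k \in \mathbb{N}$, set $A_k := \Omega \setminus \nei^{k\ell}(T)$ and abbreviate $\|\nabla\phi\|_{A_k}$ for the $L^2$ norm of $\nabla\phi$ on $A_k$. The goal is a bound of the form $\|\nabla\phi\|_{A_{k+1}}^2 \leq C_0\, \|\nabla\phi\|_{A_k \setminus A_{k+1}}^2$ for some $C_0 < \infty$ independent of $k$; rewriting this as $\|\nabla\phi\|_{A_{k+1}}^2 \leq \tfrac{C_0}{1+C_0}\,\|\nabla\phi\|_{A_k}^2$ and iterating gives geometric decay with ratio $\tilde\beta := (C_0/(1+C_0))^{1/(2\ell)} < 1$, and then adjusting constants yields the stated estimate with $\beta$ and $C$.

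First I would introduce a cut-off function $\eta \in \cS^1(\G_H)$ with $\eta \equiv 0$ on $\nei^{(k-1)\ell}(T)$, $\eta \equiv 1$ outside $\nei^{k\ell}(T)$, values in $[0,1]$, and $\|\nabla\eta\|_{L^\infty} \lesssim H^{-1}$ (this needs $\ell$ at least a fixed constant, say $\ell \geq 3$, so there is room for the transition); note $\cR := \supp(\nabla\eta)$ is contained in the annulus $\nei^{k\ell}(T)\setminus\nei^{(k-1)\ell}(T)$. Since $\eta \equiv 1$ on $A_{k+1}$, we have $\|\nabla\phi\|_{A_{k+1}}^2 \leq \|\nabla(\eta\phi)\|_{L^2(\Omega)}^2 \lesssim \Re a(\eta\phi,\eta\phi)$, the last step using the coercivity from Lemma~\ref{l:wellposedideal} (applicable because $\eta\phi$, while not in $W_h$, satisfies the same Friedrichs-type estimate $\kappa\|\eta\phi\|_{L^2} \lesssim \kappa H \|\nabla(\eta\phi)\|_{L^2} \lesssim \|\nabla(\eta\phi)\|_{L^2}$ on its support via \eqref{e:estA} and the resolution condition, so $\Re a(\eta\phi,\eta\phi) \geq \tfrac12\|\nabla(\eta\phi)\|_{L^2}^2$; more carefully one keeps $\eta\phi$ and uses that its mass lives in the region where the Friedrichs inequality holds). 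Now the key point: I want to replace the test function $\eta\phi$ by something in $W_h$ so as to invoke the corrector equation $a(w,\phi) = a_T(w,v_H)$. Take $w := (1 - I_H) I_h(\eta\phi) \in W_h$ (this is the standard trick; $I_h(\eta\phi) \in V_h$ since $\eta\phi$ is piecewise quadratic on $\G_h$, and subtracting $I_H$ of it lands in the kernel of $I_H$). Then write $a(\eta\phi,\eta\phi) = a(\eta\phi - w, \eta\phi) + a(w,\eta\phi)$. Wait — the corrector equation has $\phi$ in the second slot, so instead I decompose around $\Re a(\eta\phi, \phi)$: the honest route is $\Re a(\eta\phi,\eta\phi) = \Re a(\eta\phi, \eta\phi - \phi) + \Re a(\eta\phi,\phi)$ — but $\phi$ is in the first slot of the corrector equation, not matching either. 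The correct and standard manoeuvre is to estimate $\|\nabla(\eta\phi)\|^2$ by testing: $\|\nabla(\eta\phi)\|_{L^2}^2 = (\nabla\phi, \nabla(\eta^2\phi))_{L^2} + (\phi\nabla\eta, \phi\nabla\eta)_{L^2}$ via the product rule identity $|\nabla(\eta\phi)|^2 = \nabla\phi\cdot\nabla(\eta^2\phi) + \phi^2|\nabla\eta|^2$ (all real here in the integrand sense; with complex $\phi$ use real parts). Then $(\nabla\phi,\nabla(\eta^2\phi)) = \Re a(\eta^2\phi,\phi) + \kappa^2(\eta^2\phi,\phi) + \text{(boundary term)}$, and here $\eta^2\phi$ again is piecewise quadratic so $I_h(\eta^2\phi) \in V_h$; using that $\phi \perp$ stuff and the corrector equation applied to $w := (1-I_H)I_h(\eta^2\phi)$ reduces $a(w,\phi)$ to $a_T(w,v_H)$, which vanishes because $T \subseteq \nei^{(k-1)\ell}(T)$ and $w$ is supported away from there when $k \geq 2$. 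Collecting, everything is controlled by the "commutator" terms $\eta^2\phi - I_h(\eta^2\phi)$ and $I_H I_h(\eta^2\phi)$, which by \eqref{e:estB}, \eqref{e:estC} and the approximation properties of $I_h$, $I_H$ are bounded by $\|\nabla\phi\|$ on $\nei(\cR)$, i.e.\ on the annulus $A_{k-1}\setminus A_{k+1}$ (up to one extra coarse layer, which is why $\ell$ must exceed a small fixed number). This gives $\|\nabla\phi\|_{A_{k+1}}^2 \lesssim \|\nabla\phi\|_{A_{k-1}\setminus A_{k+1}}^2$, and relabelling ($\ell$ vs.\ $2\ell$) yields the one-step contraction.

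The main obstacle I anticipate is bookkeeping the geometry of the patches and the cut-off transition region so that the right-hand side genuinely involves only $\nabla\phi$ on a bounded-width annulus disjoint from (or overlapping only the outer part of) $A_{k+1}$, so that the absorbed term can indeed be absorbed into the left side — this is where one must be careful that $\nei(\cR)$ does not creep back inside $A_{k+1}$, forcing $\ell$ to be chosen as a sufficiently large (but $\kappa$- and $H$-independent) constant. A secondary technical point is justifying the coercivity step for $\eta\phi$ and $\eta^2\phi$, which are not in $W_h$: one handles this by noting their $L^2$-mass is supported where $\eta \neq 0$, a union of coarse elements on which the Friedrichs inequality $\kappa\|\psi\|_{L^2} \lesssim \|\nabla\psi\|_{L^2}$ holds after subtracting $I_H\psi$ — or more simply by carrying the $-\kappa^2(\cdot,\cdot)$ terms explicitly and dominating them via $\kappa H \lesssim 1$ and \eqref{e:estA}, exactly as in Lemma~\ref{l:wellposedideal}. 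Once the one-step contraction $\|\nabla\phi\|_{A_{k+1}} \leq \theta \|\nabla\phi\|_{A_k}$ with $\theta < 1$ is in hand, iterating from $k=0$ down to the relevant level and using $\|\nabla\phi\|_{L^2(\Omega)} = \|\nabla \Cor_{T,\infty}v_H\|_{L^2(\Omega)} \lesssim \|\nabla v_H\|_{L^2(T)}$ (continuity of $\Cor_\infty$, which follows from Lemma~\ref{l:wellposedideal} and the corrector equation with test function $w = \phi$ itself) closes the proof, with $\beta := \theta^{1/\ell}$.
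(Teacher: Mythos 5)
Your proposal follows essentially the same route as the paper's proof: a coarse piecewise-affine cut-off $\eta$, replacement of the test function by $(1-I_H)I_h(\eta\,\cdot)\in W_h$ supported away from $T$ so that the corrector identity \eqref{e:idealElementCorrProb} annihilates the leading term, control of the remaining commutator and $\kappa^2$-terms on a bounded-width annulus via \eqref{e:estA}--\eqref{e:estC} and \eqref{e:resolution}, and iteration of the resulting Caccioppoli-type inequality to obtain the geometric decay. The only cosmetic difference is that the paper starts from $(\nabla\phi,\eta\nabla\phi)=(\nabla\phi,\nabla(\eta\phi))-(\nabla\phi,\phi\nabla\eta)$ with a single power of $\eta$, so that all interpolated products remain piecewise $Q_2$ where \eqref{e:Ihestimate} is stated, whereas your $\eta^2$-identity produces piecewise $Q_3$ functions (note $\eta^2\phi$ is cubic, not quadratic, in each variable) --- harmless, since the $I_h$-stability extends by the same inverse inequality.
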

\begin{proof}
We define the cut-off function $\eta\in \cS^1(\G_H)$ via
\begin{equation*}
\eta \equiv 0 \quad\text{in } \nei^{m-3}(T)
\qquad\text{and}\qquad
\eta \equiv 1 \quad\text{in } \Omega\setminus\nei^{m-2}(T).
\end{equation*}
Note that $\eta$ is thereby also uniquely defined on the set
$\cR:=\supp(\nabla\eta)$.
The shape-regularity implies that $\eta$ satisfies \eqref{e:etaestimate}.
Let $v_H\in V_H$ and denote
$\phi:=\Cor_{T,\infty} v_H\in W_h$.
Elementary estimates lead to
\begin{equation*}
\begin{aligned}
\|\nabla\phi\|_{\Omega\setminus{\nei^m(T)}}^2
\leq \lvert (\nabla\phi,\eta\nabla\phi)_{L^2(\Omega)}\rvert
&
\leq
 \lvert (\nabla\phi,\nabla(\eta\phi))_{L^2(\Omega)}\rvert
 + \lvert (\nabla\phi,\phi\nabla\eta)_{L^2(\Omega)}\rvert
\\
&
\leq
M_1+M_2+M_3+M_4
\end{aligned}
\end{equation*}
for
\begin{equation*}
\begin{aligned}
M_1&:=\lvert (\nabla\phi,\nabla((1-I_h)(\eta\phi)))_{L^2(\Omega)}\rvert
&M_2&:=\lvert (\nabla\phi,\nabla((1-I_H)I_h(\eta\phi)))_{L^2(\Omega)}\rvert
\\
M_3&:=\lvert (\nabla\phi,\nabla(I_H I_h(\eta\phi)))_{L^2(\Omega)}\rvert
\quad
&M_4&:=\lvert (\nabla\phi,\phi\nabla\eta)_{L^2(\Omega)}\rvert .
\end{aligned}
\end{equation*}
The property \eqref{e:Ihestimate} proves
\begin{equation*}
M_1
\leq 
\|\nabla\phi\|_{L^2(\cR)} \, \|\nabla(\eta\phi-I_h(\eta\phi))\|_{L^2(\cR)}
\lesssim 
\|\nabla\phi\|_{L^2(\cR)} \|\nabla(\eta\phi)\|_{L^2(\cR)} .
\end{equation*}
Hence, it follows with \eqref{e:estC} that
\begin{equation*}
M_1
\lesssim
\|\nabla\phi\|_{L^2(\cR)}  \|\nabla\phi\|_{L^2(\nei(\cR))} .
\end{equation*}
Since 
$w:=(1-I_H)I_h(\eta\phi) \in W_h$, the identity
\eqref{e:idealElementCorrProb} and the fact that the support
of $w$ lies outside
$T$ imply $a(w,\phi)=a_T(w,v_H)=0$ and therefore
\begin{equation*}
\begin{aligned}
M_2
=a(w,\phi)+\kappa^2(w,\phi)
=\kappa^2(w,\phi)
\leq
\kappa^2  \|w\|_{L^2(\nei(\cR))} \|\phi\|_{L^2(\nei(\cR))}.
\end{aligned}
\end{equation*}
The estimates \eqref{e:estA} and \eqref{e:estB} 
and the resolution condition $\kappa H \lesssim 1$ 
from \eqref{e:resolution}
imply
\begin{equation*}
M_2
\lesssim
    \|\nabla\phi\|_{L^2(\nei^2(\cR))} 
    \|\nabla(\eta\phi)\|_{L^2(\nei^2(\cR))}.
\end{equation*}
The application of \eqref{e:estC} yields
\begin{equation*}
M_2
\lesssim \|\nabla\phi\|_{L^2(\nei^2(\cR))} 
( \|\nabla\phi\|_{L^2(\nei^2(\cR))} + \|\nabla\phi\|_{L^2(\nei(\cR))}) 
\lesssim
\|\nabla\phi\|_{L^2(\nei^2(\cR))}^2 .
\end{equation*}
The function $I_H I_h (\eta\phi)$ vanishes outside $\nei(\cR)$.
Hence,
the stability and approximation properties \eqref{e:IHapproxstab}
and \eqref{e:Ihestimate} lead to
\begin{equation*}
\begin{aligned}
M_3 
&
\leq \|\nabla\phi\|_{L^2(\nei(\cR))} \|\nabla(I_H I_h(\eta\phi))\|_{L^2(\nei(\cR))}
\\
&
\lesssim
\|\nabla\phi\|_{L^2(\nei(\cR))}
      \|\nabla(\eta\phi)\|_{L^2(\nei^2(\cR))}.
\end{aligned}
\end{equation*}
With \eqref{e:estC} we obtain
\begin{equation*}
M_3
\lesssim
\|\nabla\phi\|_{L^2(\nei(\cR))}
(
\|\nabla\phi\|_{L^2(\nei^2(\cR))}
+ \|\nabla\phi\|_{L^2(\nei(\cR))}
)
\lesssim
\|\nabla\phi\|_{L^2(\nei^2(\cR))}^2 .
\end{equation*}
For the term $M_4$, the Lipschitz bound \eqref{e:etaestimate} and 
\eqref{e:estA} prove
\begin{equation*}
\begin{aligned}
M_4
\leq \|\nabla\phi\|_{L^2(\cR)} \, \|\phi\|_{L^2(\cR)} C_\eta H^{-1}
\lesssim
\|\nabla\phi\|_{L^2(\nei(\cR))}^2 .
\end{aligned}
\end{equation*}
Altogether, it follows for some constant
$\widetilde C$ that
\begin{equation*}
\|\nabla\phi\|_{L^2(\Omega\setminus{\nei^m(T)})}^2
\leq
\widetilde C
\|\nabla\phi\|_{L^2(\nei^2(\cR))}^2.
\end{equation*}
Recall that
$\nei^2(\cR) = \nei^m(T)\setminus\nei^{m-5}(T)$.
Since
\begin{equation*}
\|\nabla\phi \|_{L^2(\Omega\setminus\nei^{m}(T))}^2
+
\|\nabla\phi \|_{L^2(\nei^{m}(T)\setminus\nei^{m-5}(T))}^2
=
\|\nabla\phi \|_{L^2(\Omega\setminus\nei^{m-5}(T))}^2 ,
\end{equation*}
we obtain
\begin{equation*}
(1+\widetilde C^{-1})\|\nabla\phi \|_{L^2(\Omega\setminus\nei^{m}(T))}^2
\leq
\|\nabla\phi \|_{L^2(\Omega\setminus\nei^{m-5}(T))}^2.
\end{equation*}
The repeated application of this argument proves for
$\tilde\beta:=(1+\widetilde C^{-1})^{-1}<1$ that
\begin{equation*}
\|\nabla\phi \|_{L^2(\Omega\setminus\nei^{m}(T))}^2
\leq
\tilde\beta^{\lfloor  m/5\rfloor}
\|\nabla\phi \|_{L^2(\Omega)}^2
\lesssim
\tilde\beta^{\lfloor  m/5\rfloor} \|\nabla v_H \|_{L^2(T)}^2.
\end{equation*}
This is the assertion.
\end{proof}

We proceed with the proof of Theorem~\ref{t:CorrCloseness}.

\begin{proof}[Proof of Theorem~\ref{t:CorrCloseness}]
We define the cut-off function $\eta\in \cS^1(\G_H)$ via
\begin{equation*}
\eta \equiv 0 \quad\text{in } \Omega\setminus\nei^{m-1}(T)
\qquad\text{and}\qquad
\eta \equiv 1 \quad\text{in } \nei^{m-2}(T).
\end{equation*}
This function is thereby uniquely defined and satisfies the
bound \eqref{e:etaestimate}.
Since $(1-I_H)I_h(\eta\Cor_{T,\infty} v) \in W_h(\Omega_T)$,
we deduce with C\'ea's Lemma,
the identity $I_H\Cor_{T,\infty} v = 0$ and the approximation and stability
properties \eqref{e:IHapproxstab} and \eqref{e:Ihestimate} 
and the resolution condition \eqref{e:resolution} that
\begin{equation*}
\begin{aligned}
\|\nabla(\Cor_{T,\infty} v - \Cor_{T,m} v)\|_{L^2(\Omega)}^2
&
\lesssim
\|\Cor_{T,\infty} v - (1-I_H)I_h(\eta\Cor_{T,\infty} v)\|_V^2
\\
&
=
\|(1-I_H)I_h(\Cor_{T,\infty} v - \eta\Cor_{T,\infty} v)\|_{V,\Omega\setminus\{\eta=1\}}^2
\\
&
\lesssim
\|\nabla(1-\eta)\Cor_{T,\infty}v \|_{L^2(\nei(\Omega\setminus\{\eta=1\}))}^2
\\
&
\lesssim
\|\nabla\Cor_{T,\infty}v \|_{L^2(\nei(\Omega\setminus\{\eta=1\}))}^2.
\end{aligned}
\end{equation*}
Note that $\nei(\Omega\setminus\{\eta=1\}) = \Omega\setminus\nei^{m-3}(T)$.
This and Theorem~\ref{t:decay} prove \eqref{e:CorrCloseness1}.

Define $z:=(\Cor_\infty-\Cor_m)v$ and $z_T:=(\Cor_{T,\infty}-\Cor_{T,m})v$.
The ellipticity from Lemma~\ref{l:wellposedideal} proves
\begin{equation*}
\frac12 \|\nabla z\|_{L^2(\Omega)}^2
\leq
\biggl|
\sum_{T\in\G_H}a(z,z_T) 
\biggr|.
\end{equation*}
We define the cut-off function $\eta\in \cS^1(\G_H)$ via
\begin{equation*}
\eta \equiv 1 \quad\text{in } \Omega\setminus\nei^{m+2}(T)
\qquad\text{and}\qquad
\eta \equiv 0 \quad\text{in } \nei^{m+1}(T).
\end{equation*}
This function is thereby uniquely defined and satisfies the
bound \eqref{e:etaestimate}.
For any $T\in\G_H$ we have $(1-I_H)I_h(\eta z) \in W_h$ with support
outside $\Omega_T$.
Hence, we obtain with $z=I_h z$ that
\begin{equation*}
a(z,z_T)
=
a(I_h(z-\eta z),z_T)+a(I_H I_h(\eta z),z_T).
\end{equation*}
The function $z-I_h(\eta z)$ vanishes on $S:=\{\eta=1\}$. Hence,
the first term on the right-hand side satisfies
\begin{equation*}
\lvert a(I_h(z-\eta z),z_T)\rvert
\leq
C_a
\|I_h(z-\eta z)\|_{V,\Omega\setminus S} \|z_T\|_V.
\end{equation*}
The Friedrichs inequality with constant $C_{\mathrm{F}}$ proves
together with the stability \eqref{e:Ihestimate} and the
estimate \eqref{e:estC} applied to the cut-off function $(1-\eta)$
that
\begin{equation*}
\|I_h(z-\eta z)\|_{V,\Omega\setminus S}
\lesssim
\sqrt{1+(C_{\mathrm{F}}\kappa H)^2}\|\nabla z\|_{L^2(\Omega\setminus S)}
\lesssim
\|\nabla z\|_{L^2(\Omega\setminus S)} .
\end{equation*}
Furthermore, $I_H I_h(\eta z)$ vanishes on $\Omega\setminus\nei(\supp(1-\eta))$.
Hence, we infer from Friedrichs' inequality and the resolution condition
\eqref{e:resolution}, the stability properties
\eqref{e:IHapproxstab} and \eqref{e:Ihestimate} 
and the \eqref{e:estC} that
\begin{equation*}
\lvert a(z_T,I_H I_h(\eta z))\rvert
\lesssim
   \|\nabla z\|_{L^2(\nei^2(\supp(1-\eta)))} \|z_T\|_V .
\end{equation*}
The sum over all $T\in\G_H$ and the Cauchy inequality yield with 
the finite overlap of patches
\begin{equation*}
\begin{aligned}
\|\nabla z\|_{L^2(\Omega)}^2
&\lesssim
\sum_{T\in\G_H} \|\nabla z\|_{L^2(\nei^2(\supp(1-\eta)))} \|z_T\|_V 
\\
&
\lesssim
 \sqrt{\Colm}
 \|\nabla z\|_{L^2(\Omega)} \sqrt{\sum_{T\in\G_H}\|z_T\|_V^2}.
\end{aligned}
\end{equation*}
The combination with \eqref{e:CorrCloseness1} concludes the proof.
\end{proof}

\end{appendix}

{\footnotesize
\newcommand{\etalchar}[1]{$^{#1}$}

}

\end{document}